\newcommand{\Plus}{\mathord{\begin{tikzpicture}[baseline=0ex, line width=1, scale=0.13]
\draw (1,0) -- (1,2);
\draw (0,1) -- (2,1);
\end{tikzpicture}}}
\newtheorem{theorem}{Theorem}[section]
\newtheorem{lemma}[theorem]{Lemma}
\newtheorem{proposition}[theorem]{Proposition}
\newlength\cellsize \setlength\cellsize{15\unitlength}
\newcommand\cellify[1]{\def\thearg{#1}\def\nothing{}%
\ifx\thearg\nothing\vrule width0pt height\cellsize depth0pt%
  \else\hbox to 0pt{\usebox2\hss}\fi%
  \vbox to 15\unitlength{\vss\hbox to 15\unitlength{\hss$#1$\hss}\vss}}
\newcommand\tableau[1]{\vtop{\let\\=\cr
\setlength\baselineskip{-12000pt}
\setlength\lineskiplimit{12000pt}
\setlength\lineskip{0pt}
\halign{&\cellify{##}\cr#1\crcr}}}
\newcommand{\graybox}{\textcolor[RGB]{165,165,165}{\rule{1\cellsize}{1\cellsize}}\hspace{-\cellsize}\usebox2}
\title{A cornucopia of quasi-Yamanouchi tableaux}  
\author[George Wang]{George Wang}
\address{Department of Mathematics, University of Pennsylvania, Philadelphia, PA, 19104}
\email{wage@math.upenn.edu}
\subjclass[2010]{Primary 05A19; Secondary 05A15, 05E05}
\date{\today}
\keywords{tableau, descent, statistic, enumeration}
\begin{document}

\maketitle

\begin{abstract}
    Quasi-Yamanouchi tableaux are a subset of semistandard Young tableaux and refine standard Young tableaux. They are closely tied to the descent set of standard Young tableaux and were introduced by Assaf and Searles to tighten Gessel's fundamental quasisymmetric expansion of Schur functions. The descent set and descent statistic of standard Young tableaux repeatedly prove themselves useful to consider, and as a result, quasi-Yamanouchi tableaux make appearances in many ways outside of their original purpose.
 Some examples, which we present in this paper, include the Schur expansion of Jack polynomials, the decomposition of Foulkes characters, and the bigraded Frobenius image of the coinvariant algebra. While it would be nice to have a product formula enumeration of quasi-Yamanouchi tableaux in the way that semistandard and standard Young tableaux do, it has previously been shown by the author that there is little hope on that front. The goal of this paper is to address a handful of the numerous alternative enumerative approaches. 
In particular, we present enumerations of quasi-Yamanouchi tableaux using $q$-hit numbers, semistandard Young tableaux, weighted lattice paths, and symmetric polynomials, as well as the fundamental quasisymmetric and monomial quasisymmetric expansions of their Schur generating function.
\end{abstract}

\tableofcontents

\section{Introduction}

Schur polynomials have an elegant combinatorial description as a sum of monomials indexed by semistandard Young tableaux. However, as the number of variables increases, the number of semistandard Young tableaux rises dramatically, quickly making a computation using this description intractable. Ira Gessel \cite{Ge84} defined the fundamental basis for quasisymmetric polynomials and proved that Schur polynomials can instead be expressed as a sum of fundamental quasisymmetric polynomials indexed by standard Young tableaux. Since the number of standard Young tableaux of a given shape is fixed, this provides a significant computational improvement over the unbounded number of semistandard Young tableaux. Still, when the number of variables is low enough, it turns out that some of the standard Young tableaux contribute nothing to the expansion, indicating that some further improvement can be made. Sami Assaf and Dominic Searles \cite{AsSe17} introduced the concept of quasi-Yamanouchi tableaux and proved that we can tighten Gessel's expansion by summing over these tableaux instead, which give exactly the nonzero terms.

Quasi-Yamanouchi tableaux are in bijection with standard Young tableaux of the same shape, and through that bijection, quasi-Yamanouchi tableaux refine standard Young tableaux by their descent statistic.
In general, descent statistics have formed a rich and fruitful area, for example in the past with Solomon's descent algebra \cite{So76} and subsequent study of it \cite{GR89} or, more recently, work on shuffle-compatible permutation statistics by Gessel and Zhuang \cite{GZ18}. Descents of standard Young tableaux do not appear to be an exception.
Quasi-Yamanouchi tableaux inherit this value, and consequently, applications are readily found. In \textsection7, we discuss some of these applications, which involve the Schur expansion of Jack polynomials, the decomposition of Foulkes characters into irreducible characters, and the bigraded Frobenius image of the coinvariant algebra. 

Semistandard Young tableaux and standard Young tableaux are each enumerated by celebrated product formulas, and one would hope that the same could be said for quasi-Yamanouchi tableaux. Unfortunately, in \cite{Wa16}, the author demonstrates that there is likely only a product formula for certain special cases due to large primes appearing for general shapes. 
The work in this paper takes a number of alternative approaches and was inspired by the surprising appearance of quasi-Yamanouchi tableaux in the coefficients of Jack polynomials under a binomial coefficient basis \cite{FPSAC} and a hit number interpretation of the same coefficients. Comparing the two equivalent interpretations of the coefficients gives an enumeration of quasi-Yamanouchi tableaux in terms of hit numbers of certain Ferrers boards. We reproduce this enumeration in \textsection3 and prove two $q$-analogues involving the major index and charge statistics. In \textsection4, we give a summation formula in terms of semistandard Young tableaux, and in \textsection5 we first give an enumeration in terms of weighted lattice paths and then using symmetric polynomials. Finally, we consider the Schur generating function in \textsection6 and give its fundamental quasisymmetric expansion and monomial symmetric expansion.

\section{Preliminaries}

We first present the more general concepts that will be used throughout the paper; the more specific concepts will be located at the start of their relevant section. A \emph{partition} $\lambda = (\lambda_1 \geq \ldots \geq \lambda_k > 0)$ is a weakly decreasing sequence of positive integers. The \emph{size} of $\lambda$ is denoted $|\lambda|$ and is the sum of the integers of the sequence. The \emph{length} of $\lambda$, $\ell(\lambda)$, is the number of integers in the partition. Let $n(\lambda) = \sum_{i=1}^k (i-1)\lambda_i$. We say that a partition $\lambda$ dominates a partition $\mu$ if for all $i \geq 1$, $\lambda_1+\cdots+\lambda_i \geq \mu_1+\cdots+\mu_i$. 

We identify a partition with its \emph{diagram}, where rows are counted from bottom to top, the number of boxes in the $i$th row equals $\lambda_i$, and boxes are left justified. The conjugate $\lambda$ is written $\lambda'$ and is obtained by reflecting the diagram across the diagonal. We identify $u=(i,j)$ with the box in the $i$th column and $j$th row. The \emph{content} of a square is $c(u)=i-j$, and the \emph{hook-length} of a box is $h(u)$, which counts the number of boxes $(a,b)$ such that either $a>i$ or $b>j$ or $a=i,b=j$.

\begin{figure}[ht]
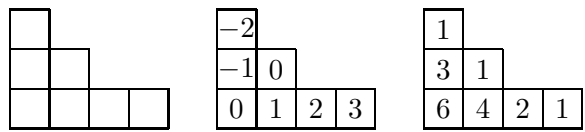

    \centering
   $$\tableau{\ \\ \ & \ \\ \ & \ & \ & \ \\}
   \ \ \ \ \ 
   \tableau{-2 \\ -1 & 0 \\ 0 & 1 & 2 & 3 \\}
   \ \ \ \ \ 
   \tableau{1 \\ 3 & 1 \\ 6 & 4 & 2 & 1 \\}$$
    \caption{The diagram of $(4,2,1)$, the cell contents, and the cell hook-lengths.}
    \label{fig:my_label}
\end{figure}

We write permutations $\pi \in S_n$ in one line notation, $\pi = \pi_1\cdots\pi_n$, where $\pi_i = \pi(i)$. The \emph{descent set} of $\pi$ is $\textnormal{Des}(\pi) = \{ i \in [n-1]\ | \ \pi_i > \pi_{i+1}\}$, and we denote the size $|Des(\pi)|$ by $des(\pi)$. The major index for a permutation is $\textnormal{maj}(\pi) = \sum_{i\in \textnormal{Des}(\pi)} i$. 

Bases of the ring of symmetric functions are indexed by partitions, and in particular, $e_\lambda$, $m_\lambda$, and $s_\lambda$ are the elementary symmetric, monomial symmetric, and Schur functions respectively.
We also write $F_\sigma(x)$ for the fundamental quasisymmetric function, where $\sigma \subseteq \{1,\ldots, n-1\}$ and 
$F_\sigma(x) = \sum_{\substack{i_1 \leq \cdots \leq i_n \\ j\in \sigma \Rightarrow i_j < i_{j+1}}} x_{i_1}\cdots x_{i_n}$.

A \emph{semistandard Young tableau} (SSYT) is a filling of a partition $\lambda$ using positive integers that weakly increase to the right and strictly increase upwards, and the set of such fillings is denoted $\textnormal{SSYT}(\lambda)$. When the entries are unbounded, there are infinitely many semistandard Young tableaux of a given shape, so it is sometimes useful to consider instead $\textnormal{SSYT}_m (\lambda)$, the set of SSYT of shape $\lambda$ with entries at most $m$. We will engage in some typical abuse of notation by using these to also mean the number of semistandard Young tableaux in the set, and we repeat this abuse later when we define standard Young tableaux and quasi-Yamanouchi tableaux. $\textnormal{SSYT}_m (\lambda)$ is enumerated by Stanley's hook-content formula, which we reproduce below. 

\begin{theorem}[Hook-content formula \cite{Sta71}] Given a partition $\lambda$, 
$$\textnormal{SSYT}_m (\lambda) = \prod_{u\in \lambda}\frac{m+c(u)}{h(u)}.$$
\end{theorem}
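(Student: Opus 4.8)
The plan is to recognize $\textnormal{SSYT}_m(\lambda)$ as a principal specialization of a Schur function, evaluate it through the bialternant formula, and then translate the resulting product into hook–content form. Since $s_\lambda(x_1,\dots,x_m)=\sum_{T\in\textnormal{SSYT}_m(\lambda)}x^{\mathrm{wt}(T)}$, setting every variable equal to $1$ gives $\textnormal{SSYT}_m(\lambda)=s_\lambda(1,\dots,1)$ with $m$ ones. Writing $\beta_i=\lambda_i+m-i$ for $i=1,\dots,m$ (padding $\lambda$ with zero parts so it has $m$ parts), the classical bialternant (Weyl character) formula reads $s_\lambda(x_1,\dots,x_m)=\det\!\big(x_i^{\beta_j}\big)\big/\det\!\big(x_i^{m-j}\big)$. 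Substituting $x_i=q^{\,i-1}$ turns each determinant into a Vandermonde determinant (in the quantities $q^{\beta_j}$, respectively $q^{m-j}$), so after cancelling the common power of $q$ one obtains the finite principal specialization
$$s_\lambda\big(1,q,\dots,q^{m-1}\big)=q^{\,n(\lambda)}\prod_{1\le i<j\le m}\frac{1-q^{\,\beta_i-\beta_j}}{1-q^{\,j-i}},$$
and letting $q\to1$ yields $\textnormal{SSYT}_m(\lambda)=\prod_{1\le i<j\le m}\frac{\beta_i-\beta_j}{j-i}$, a form of the Weyl dimension formula.

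It then remains to identify this product with $\prod_{u\in\lambda}\frac{m+c(u)}{h(u)}$, which I would do by evaluating the content numerator and the hook denominator separately. For the contents: the boxes in the $j$-th row of $\lambda$ are $(1,j),\dots,(\lambda_j,j)$, with contents $1-j,2-j,\dots,\lambda_j-j$, so their contribution to $\prod_u(m+c(u))$ is $(m+1-j)(m+2-j)\cdots(m+\lambda_j-j)=\beta_j!\big/(m-j)!$; multiplying over all rows and using the elementary identity $\prod_{j=1}^m(m-j)!=\prod_{1\le i<j\le m}(j-i)$ gives $\prod_{u\in\lambda}(m+c(u))=\big(\prod_{j=1}^m\beta_j!\big)\big/\prod_{1\le i<j\le m}(j-i)$. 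For the hook lengths: the crucial combinatorial fact is that the hook lengths occurring in the $j$-th row of $\lambda$ are exactly the ``near-interval'' $\{1,2,\dots,\beta_j\}\setminus\{\beta_j-\beta_k:j<k\le m\}$, so the $j$-th row contributes $\beta_j!\big/\prod_{k>j}(\beta_j-\beta_k)$; multiplying over the rows (empty rows $j>\ell(\lambda)$ contributing trivial factors that cancel) gives $\prod_{u\in\lambda}h(u)=\big(\prod_{j=1}^m\beta_j!\big)\big/\prod_{1\le i<j\le m}(\beta_i-\beta_j)$. Dividing the two product formulas, the factorials cancel and we are left with $\prod_{u\in\lambda}\frac{m+c(u)}{h(u)}=\prod_{1\le i<j\le m}\frac{\beta_i-\beta_j}{j-i}=\textnormal{SSYT}_m(\lambda)$.

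The Vandermonde evaluation and the content computation are routine bookkeeping; the one genuinely combinatorial ingredient, and hence the step I expect to be the main obstacle, is the near-interval description of the hook lengths in a fixed row. That fact is proved by recording, for each box $(i,j)$ in the row, the identity $h(i,j)=(\lambda_j-i)+(\lambda'_i-j)+1$ and checking that as $i$ runs from $1$ to $\lambda_j$ these values fill in $\{1,\dots,\beta_j\}$ apart from the gaps $\beta_j-\beta_k$; it is standard, appearing already in the usual proof of the hook-length formula. As a byproduct, keeping $q$ generic throughout and applying the same two identities in their $q$-deformed versions recovers Stanley's stronger $q$-hook-content identity $s_\lambda(1,q,\dots,q^{m-1})=q^{\,n(\lambda)}\prod_{u\in\lambda}\frac{1-q^{\,m+c(u)}}{1-q^{\,h(u)}}$, of which the stated theorem is the $q=1$ specialization.
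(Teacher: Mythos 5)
The paper does not prove this statement; it is quoted as a known background result from Stanley, so there is no internal proof to compare against. Your derivation is a correct and standard one: the identification $\textnormal{SSYT}_m(\lambda)=s_\lambda(1^m)$, the bialternant/Vandermonde evaluation of the principal specialization, and the translation of $\prod_{i<j}(\beta_i-\beta_j)/(j-i)$ into hook--content form all check out, including the boundary bookkeeping for the padded empty rows (where $\beta_j!=\prod_{k>j}(\beta_j-\beta_k)=(m-j)!$, so those factors are genuinely trivial). You correctly isolate the one nontrivial combinatorial ingredient, namely that the hook lengths in row $j$ form the set $\{1,\dots,\beta_j\}\setminus\{\beta_j-\beta_k : k>j\}$; since that lemma is doing all the real work, in a self-contained writeup you should prove it rather than defer to ``it is standard'' (the identity $h(i,j)=(\lambda_j-i)+(\lambda'_i-j)+1$ you cite does give a short injectivity-plus-counting argument). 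Your closing remark that the same computation kept at generic $q$ yields Stanley's $q$-hook-content formula is also correct and is in fact the form the paper uses later (in its \textsection 3, via $U_m(P_\lambda,\omega;m)$), so your approach dovetails well with the rest of the paper.
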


\begin{figure}[ht]
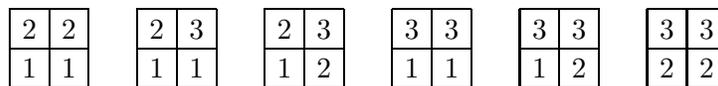

    \centering
    $$\tableau{2&2\\ 1&1\\}
\ \ \ \ \ 
\tableau{2&3\\ 1&1\\}
\ \ \ \ \ 
\tableau{2&3\\ 1&2\\}
\ \ \ \ \ 
\tableau{3&3\\ 1&1\\}
\ \ \ \ \ 
\tableau{3&3\\ 1&2\\}
\ \ \ \ \ 
\tableau{3&3\\ 2&2\\}
$$
    \caption{All 6 elements of SSYT$_3(2,2)$.}
    \label{figSSYT}
\end{figure}

The weight of an SSYT $T$ is $wt(T) = (t_1,t_2,\ldots)$, where $t_i$ is the number of times that $i$ appears, and given partitions $\lambda,\mu$, the \emph{Kostka} numbers $K_{\lambda\mu}$ count the number of SSYT of shape $\lambda$ and weight $\mu$. 
A \emph{standard Young tableau} (SYT) of shape $\lambda$ with size $n$ is a semistandard Young tableau of shape $\lambda$ with weight $(1^n)$, and the set of such fillings is denoted $\textnormal{SYT}(\lambda)$. Frame, Robinson, and Thrall counted standard fillings using the hook-length formula.

\begin{theorem}[Hook-length formula \cite{FRT54}] Given a partition $\lambda$,
$$\textnormal{SYT}(\lambda) = \frac{n!}{\prod_{u\in \lambda} h(u)}.$$
\end{theorem}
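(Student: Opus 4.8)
The hook-length formula is a classical theorem that we invoke without proof, but it is worth recording that it follows quickly from the hook-content formula just stated, and that is the route I would take. The plan is to extract the top-degree behavior in $m$ from both sides. First observe that, as a function of $m$, the right-hand side $\prod_{u\in\lambda}\frac{m+c(u)}{h(u)}$ is a polynomial of degree $n=|\lambda|$ whose leading coefficient is $1/\prod_{u\in\lambda}h(u)$. So it suffices to show that the leading coefficient of $\textnormal{SSYT}_m(\lambda)$, viewed as a polynomial in $m$, equals $\textnormal{SYT}(\lambda)/n!$.

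To see this, expand $\textnormal{SSYT}_m(\lambda)=s_\lambda(1^m)=\sum_{\mu\vdash n}K_{\lambda\mu}\,m_\mu(1^m)$. Here $m_\mu(1^m)$, the number of distinct rearrangements of $\mu$ padded by zeros to length $m$, equals $\binom{m}{\ell(\mu)}$ times a constant depending only on the multiplicities of the parts of $\mu$; in particular it is a polynomial in $m$ of degree exactly $\ell(\mu)$. The unique $\mu\vdash n$ with the maximal number of parts is $(1^n)$, for which $K_{\lambda,(1^n)}=\textnormal{SYT}(\lambda)$ and $m_{(1^n)}(1^m)=\binom{m}{n}$ has leading term $m^n/n!$. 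Every other $\mu$ contributes a polynomial of degree at most $n-1$ in $m$, so no cancellation can reach the $m^n$ coefficient. Hence the leading coefficient of $\textnormal{SSYT}_m(\lambda)$ is $\textnormal{SYT}(\lambda)/n!$, and equating it with $1/\prod_{u\in\lambda}h(u)$ gives $\textnormal{SYT}(\lambda)=n!/\prod_{u\in\lambda}h(u)$.

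The only point that genuinely requires care is the bookkeeping: one must be sure that the hook-content formula, stated for each fixed $m$, really coincides with the generating polynomial $s_\lambda(1^m)$ for all $m$, so that comparing polynomial coefficients is legitimate, and that the degree count in the monomial expansion is tight. Both are routine — the first is immediate from the definition of the Schur polynomial as the weight generating function over $\textnormal{SSYT}_m(\lambda)$, and the second from the explicit formula for $m_\mu(1^m)$ above. This is the step I would expect to be the main (though mild) obstacle, since it is where the argument has to be made rigorous rather than merely asymptotic.

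If one prefers a proof that does not presuppose the hook-content formula, the standard alternative is the probabilistic argument of Greene, Nijenhuis, and Wilf: induct on $n$ using $\textnormal{SYT}(\lambda)=\sum_{c}\textnormal{SYT}(\lambda\setminus c)$, the sum running over the inner corners $c$ of $\lambda$, and verify that the ratio of the conjectured hook products for $\lambda\setminus c$ and $\lambda$ equals the probability that a uniformly started ``hook walk'' on the diagram of $\lambda$ terminates at $c$; since those probabilities sum to $1$, the induction closes. I would present the limiting argument as the main proof, since the hook-content formula is already in hand.
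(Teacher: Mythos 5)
The paper does not prove this statement at all: it is quoted as a classical result of Frame, Robinson, and Thrall with a citation, and is used as background. So there is no "paper proof" to match; what matters is whether your derivation stands on its own, and it does. Your route — treating $\textnormal{SSYT}_m(\lambda)$ as a polynomial in $m$, using $\textnormal{SSYT}_m(\lambda)=\sum_{\mu\vdash n}K_{\lambda\mu}\,m_\mu(1^m)$ with $m_\mu(1^m)$ of degree exactly $\ell(\mu)$, isolating the top coefficient $K_{\lambda,(1^n)}/n!=\textnormal{SYT}(\lambda)/n!$, and equating it with the leading coefficient $1/\prod_{u\in\lambda}h(u)$ of the hook-content product — is correct and complete. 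The one hypothesis you flag, that the two expressions agree as polynomials and not merely numerically, is settled by the fact that they agree for every positive integer $m$, hence identically; and the degree bound is tight because $(1^n)$ is the unique partition of $n$ with $n$ parts, so no other term can contribute to the $m^n$ coefficient. What this buys over the paper's bare citation is a self-contained deduction from the theorem stated immediately before it, at the cost of taking the hook-content formula as the primitive; your mention of the Greene--Nijenhuis--Wilf hook walk is the standard way to avoid that dependency, but you rightly do not need it here.
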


\begin{figure}[ht]
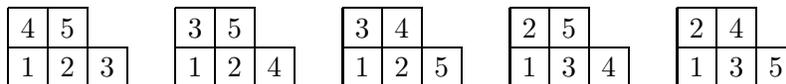

    \centering
    $$\tableau{4&5\\1&2&3}
\ \ \ \ \ 
\tableau{3&5\\1&2&4}
\ \ \ \ \ 
\tableau{3&4\\1&2&5}
\ \ \ \ \ 
\tableau{2&5\\1&3&4}
\ \ \ \ \ 
\tableau{2&4\\1&3&5}
$$
    \caption{All 5 elements of SYT$(3,2)$.}
    \label{figSYT}
\end{figure}

The descent set for $T\in \textnormal{SYT}(\lambda)$ is $\textnormal{Des}(T) = \{ i \in [n-1] \ | \ i+1 \textnormal{ is above } i\}$.
If we write the descent set as $\{d_1,d_2,\ldots,d_{k-1}\}$ in increasing order, then the first \emph{run} of the tableau is the set of boxes that contain all the entries from $1$ to $d_1$. For $1<i<k$, the $i$th run is the set of boxes containing entries from $d_{i-1}+1$ to $d_i$, and the $k$th run starts at $d_k+1$ and ends at $n$. 

\begin{figure}[ht]
    \centering
    $$\tableau{\mathbf{9} &\mathbf{10} &\textcolor[RGB]{140,140,140}{12} \\
\textcolor[RGB]{140,140,140}4 &\textcolor[RGB]{140,140,140}5 &\textcolor[RGB]{140,140,140}7 &\mathbf{11} \\
\textcolor[RGB]{140,140,140}{1} &\textcolor[RGB]{140,140,140}{2} &\textcolor[RGB]{140,140,140}{3} &\textcolor[RGB]{140,140,140}6 &\textcolor[RGB]{140,140,140}8 \\ }
$$
    \caption{This tableau has descent set $\{3,6,8,11\}$ and has the fourth run bolded.}
    \label{figRuns}
\end{figure}

An SSYT is a \emph{quasi-Yamanouchi tableau} (QYT) if when $i$ appears in the tableau, some instance of $i$ is in a higher row than some instance of $i-1$ for all $i$. We write $\textnormal{QYT}(\lambda)$ to denote the set of QYT of shape $\lambda$, $\textnormal{QYT}_{\leq m}(\lambda)$ to denote those with largest entry at most $m$, and $\textnormal{QYT}_{=m}(\lambda)$ to denote those with largest entry exactly $m$. As mentioned before, at times we will also write these to mean the number of objects in the set. 

\begin{figure}[ht]
    \centering
    $$
\tableau{4\\
2&3\\
1&2&2&4\\
}
\ \ \ \ \ 
\tableau{
4\\
3&3\\
1&2&2&5\\}
$$
    \caption{The left is a quasi-Yamanouchi filling, while the right is not.}
    \label{figQYTex}
\end{figure}

\begin{figure}[ht]
    \centering
    $$\tableau{3\\
2&2\\
1&1\\}
\ \ \ \ \ 
\tableau{3\\
2&3\\
1&1\\}
\ \ \ \ \ 
\tableau{3\\
2&3\\
1&2\\}
\ \ \ \ \ 
\tableau{4\\
2&3\\
1&2\\}
\ \ \ \ \ 
\tableau{3\\
2&4\\
1&3\\}
$$
    \caption{QYT of shape $(2,2,1)$, showing that $\textrm{QYT}_{=3}(2,2,1) = 3$ and $\textrm{QYT}_{=4}(2,2,1) = 2$.}
    \label{figQYTcount}
\end{figure}

It turns out that there is a nice bijection between $\textnormal{QYT}(\lambda)$ and $\textnormal{SYT}(\lambda)$ via the following destandardization map \cite{AsSe17}. Given a tableau $T\in \textnormal{SYT}(\lambda)$, its destandardization is the SSYT constructed by changing the value of all entries in the $i$th run of $T$ to $i$. It is easy to see that the resulting SSYT is also quasi-Yamanouchi and that this map has a well defined inverse, which we call standardization. By construction, it is clear that the map sents SYT with $k$ runs to QYT with maximum value $k-1$. In this sense, the set $\{\textnormal{QYT}_{=m}(\lambda)\}$ refines $\textnormal{SYT}(\lambda)$ by number of descents, and we will often identify a QYT with its standardization. For further discussion on properties of QYT, see \cite{Wa16}.

As with permutations, we define the \emph{major index} of a tableau $T\in \textnormal{SYT}(\lambda)$ to be $\textnormal{maj}(T) = \sum_{i\in \textnormal{Des}(T)} i$. We also have the \emph{charge} statistic for standard Young tableaux: each entry $i$ in $T$ has a charge defined recursively, where $ch(1) = 0$, $ch(i+1) = ch(i)$ if $i \not\in \textnormal{Des}(T)$, $ch(i+1) = ch(i) +1$ if $i \in \textnormal{Des}(T)$, and $ch(T) = \sum_{i=1}^{|\lambda|} ch(i)$. 
For a quasi-Yamanouchi tableau, we define its descent set, major index, and charge statistic to be those of its standardization.

\section{Hit number formulas}

Fix $n \in \mathbb{N}$. A \emph{board} $B$ is a subset of $[n]\times [n]$, which we view as an $n$ by $n$ array of squares. In particular, a \emph{Ferrers board} is a board composed of adjacent columns whose heights are weakly increasing from left to right. If $\lambda$ is a partition of size $n$, then we can construct a Ferrers board $B_\lambda$ as follows. Take the contents $c_1,\ldots,c_n$ of $\lambda$ arranged in weakly decreasing order, then let the heights of the columns of $B_\lambda$ be $(c_i+i-1)$. Let $B_\lambda \times 1$ be the board obtained by incrementing the height of every column by one. We note that it is always possible to do this once for any $B_\lambda$, as the construction never creates a column with height $n$. 
From the definitions, we get a relation between $B_\lambda$ and $B_{\lambda'}$. 
\begin{proposition}\label{Blambda}
Given a partition $\lambda$, the complement of $B_\lambda \times 1$ is $B_{\lambda'}$ up to rotation.
\end{proposition}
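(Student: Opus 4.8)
The plan is to derive the statement from the single elementary observation that conjugation negates contents. Indeed, the reflection $(a,b)\mapsto(b,a)$ carries the diagram of $\lambda$ onto the diagram of $\lambda'$ and sends a box of content $a-b$ to a box of content $-(a-b)$, so the multiset of contents of $\lambda'$ is the negative of the multiset of contents of $\lambda$. Thus, if $c_1\geq\cdots\geq c_n$ are the contents of $\lambda$ in weakly decreasing order, then the contents of $\lambda'$ in weakly decreasing order are $c'_j=-c_{n+1-j}$ for $j=1,\dots,n$. This is the only real content of the argument; everything else is bookkeeping with column heights.

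Next I would record the column heights of the three boards in play. By definition the $i$th column of $B_\lambda$ has height $c_i+i-1$, so the $i$th column of $B_\lambda\times 1$ has height $c_i+i$; these are weakly increasing in $i$ and at most $n$ (using the observation, already noted when $B_\lambda\times 1$ was defined, that $B_\lambda$ has no column of full height $n$), so $B_\lambda\times 1$ sits inside the $n\times n$ box and its complement there is well defined. In column $i$ that complement consists of the top $n-(c_i+i)$ cells. A rotation of the $n\times n$ box by $180^\circ$ simultaneously reverses the order of the columns and flips each column vertically: it sends column $i$ to column $n+1-i$ and turns those top cells into the bottom $n-(c_i+i)$ cells. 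Hence the rotated complement is the bottom-justified board whose $j$th column (with $j=n+1-i$) has height $n-\bigl(c_{n+1-j}+(n+1-j)\bigr)=j-1-c_{n+1-j}$.

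Finally, using the content identity, the $j$th column of $B_{\lambda'}$ has height $c'_j+j-1=-c_{n+1-j}+j-1=j-1-c_{n+1-j}$, which agrees with the rotated complement column by column. This identifies the $180^\circ$ rotation of the complement of $B_\lambda\times 1$ with $B_{\lambda'}$ and completes the proof. I do not anticipate a genuine obstacle: the one thing to watch is the reversal $j=n+1-i$ when passing from the complement to its rotation, and the fact that a single reflection does not suffice — flipping vertically alone leaves the heights weakly decreasing, while reversing the columns alone leaves the board top-justified, so one needs both (i.e. a $180^\circ$ rotation) to recover an honest Ferrers board.
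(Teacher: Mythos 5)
Your proof is correct and takes the same route the paper intends: the paper states the proposition as immediate ``from the definitions,'' and you simply carry out that computation, using the fact that conjugation negates contents to match the column heights $j-1-c_{n+1-j}$ of the rotated complement with those of $B_{\lambda'}$. The bookkeeping checks out (e.g.\ for $\lambda=(3,2)$ it reproduces the heights $(1,1,2,2,2)$ of $B_{(2,2,1)}$ shown in the paper's figure), so nothing is missing.
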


\begin{figure}[ht]
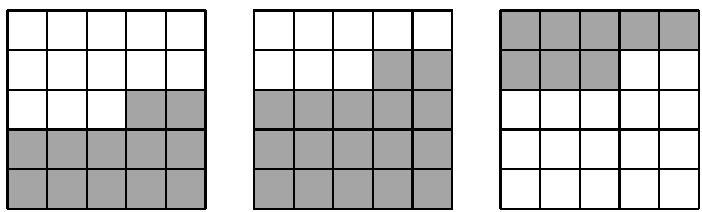

    \centering
    $$\tableau{
    \ & \ & \ & \ & \ \\
    \ & \ & \ & \ & \ \\
    \ & \ & \ & \graybox & \graybox \\
    \graybox & \graybox & \graybox & \graybox & \graybox \\
    \graybox & \graybox & \graybox & \graybox & \graybox \\}
    \ \ \ \ \ 
    \tableau{
    \ & \ & \ & \ & \ \\
    \ & \ & \ & \graybox & \graybox \\
    \graybox & \graybox & \graybox & \graybox & \graybox \\
    \graybox & \graybox & \graybox & \graybox & \graybox \\
    \graybox & \graybox & \graybox & \graybox & \graybox \\}
    \ \ \ \ \ 
    \tableau{
    \graybox & \graybox & \graybox & \graybox & \graybox \\
    \graybox & \graybox & \graybox & \ & \ \\
    \ & \ & \ & \ & \ \\
    \ & \ & \ & \ & \ \\
    \ & \ & \ & \ & \ \\}$$
    \caption{$B_{(3,2)}$, $B_{(3,2)}\times 1$, and $B_{(2,2,1)}$ rotated.}
    \label{figBlambda}
\end{figure}

Given a permutation $\pi \in S_n$, the graph of $\pi$ is $\Gamma(\pi) = \{ (i,\pi_i) \ | \ 1 \leq i \leq n\}$, and the number of \emph{hits} of $\pi$ on $B$ is $|B\cap \Gamma(\pi)|$. We define the $k$th hit number $h_k(B)$ to be the number of permutations in $S_n$ which have exactly $k$ hits on $B$.
Dworkin \cite{Dw98} gave a combinatorial interpretation of a $q$-analogue of hit numbers for Ferrers boards, which we use as the definition. For $\pi \in S_n$, place a cross at each square in $\Gamma(\pi)$, and for any square to the right of a cross, put a bullet. Then from each cross, draw circles going up and wrapping around the top edge of the $[n]\times[n]$ array, skipping over bullets, and stopping after hitting the top border of the Ferrers board. The $q$ weight of $\pi$ is the number of circles at the end of this process. The $k$th $q$-hit number $T_k(B)$ is the sum of $q$ weights over all permutations that hit the board exactly $k$ times.

\begin{figure}[ht]
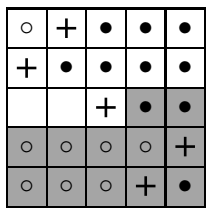

    \centering
$$\tableau{
    \circ & \Plus & \bullet & \bullet & \bullet \\
    \Plus & \bullet & \bullet & \bullet & \bullet \\
    \ & \ & \Plus & \mathclap{\graybox} \mathclap{\raisebox{5pt}{$\bullet$}} & \mathclap{\graybox} \mathclap{\raisebox{5pt}{$\bullet$}} \\
    \mathclap{\graybox} \mathclap{\raisebox{5pt}{$\circ$}} & \mathclap{\graybox} \mathclap{\raisebox{5pt}{$\circ$}} & \mathclap{\graybox} \mathclap{\raisebox{5pt}{$\circ$}} & \mathclap{\graybox} \mathclap{\raisebox{5pt}{$\circ$}} &\mathclap{\graybox} \mathclap{\raisebox{4pt}{$\Plus$}} \\
    \mathclap{\graybox} \mathclap{\raisebox{5pt}{$\circ$}} & \mathclap{\graybox} \mathclap{\raisebox{5pt}{$\circ$}} & \mathclap{\graybox} \mathclap{\raisebox{5pt}{$\circ$}} & \mathclap{\graybox} \mathclap{\raisebox{4pt}{$\Plus$}} & \mathclap{\graybox} \mathclap{\raisebox{5pt}{$\bullet$}} \\}$$
    \caption{The $q$ weight of $45312$ on $B_{3,2}$ is $8$.}
    \label{figqHit}
\end{figure}

In \cite{FPSAC}, two separate interpretations of the coefficients of the Schur expansion of the one row case of Jack polynomials are given. One is in terms of quasi-Yamanouchi tableaux, and the other is in terms of hit numbers. By comparing these interpretations, we get the following hit number formula for quasi-Yamanouchi tableaux.
\begin{theorem}\label{qytHit}
Given a partition $\lambda$ of $n$ and $0\leq k \leq n-1$,
$$\textnormal{QYT}_{=k+1}(\lambda) = \frac{h_k(B_{\lambda'})}{\prod_{u\in \lambda} h(u)}.$$
\end{theorem}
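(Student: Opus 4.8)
In \cite{FPSAC} this identity emerges from comparing two formulas for the Schur coefficients of the one-row Jack polynomial, and one could simply quote those two interpretations; the plan below is instead a self-contained argument that reduces everything to a polynomial identity in an auxiliary variable $m$. First I would record the expansion
$$\textnormal{SSYT}_m(\lambda) = \sum_{k\ge 0} \textnormal{QYT}_{=k+1}(\lambda)\binom{m+n-1-k}{n},$$
valid for every positive integer $m$. This follows from Gessel's identity $s_\lambda = \sum_{T\in\textnormal{SYT}(\lambda)} F_{\textnormal{Des}(T)}$ by specializing at $x=(1^m)$: a stars-and-bars count gives $F_\sigma(1^m) = \binom{m+n-1-|\sigma|}{n}$, and grouping standard tableaux by number of descents turns the right-hand side into the displayed sum, since the destandardization bijection identifies $\textnormal{QYT}_{=k+1}(\lambda)$ with the set of standard tableaux of shape $\lambda$ having exactly $k$ descents. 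Multiplying through by $\prod_{u\in\lambda}h(u)$ and invoking the hook-content formula converts this into
$$\prod_{u\in\lambda}(m+c(u)) = \sum_{k\ge 0}\Bigl(\textnormal{QYT}_{=k+1}(\lambda)\prod_{u\in\lambda}h(u)\Bigr)\binom{m+n-1-k}{n}.$$

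The remaining task is to show the same polynomial equals $\sum_{k} h_k(B_{\lambda'})\binom{m+n-1-k}{n}$. Here I would apply the factorization theorem of Goldman, Joichi, and White to the Ferrers board $B_{\lambda'}$. Its column heights are $i-1-c_{n+1-i}$, where $c_1\ge\cdots\ge c_n$ are the contents of $\lambda$ in weakly decreasing order, because the contents of $\lambda'$ are the negatives of those of $\lambda$; with these heights the factorization telescopes to $\prod_{u\in\lambda}(x-c(u)) = \sum_{j\ge0} r_{n-j}(B_{\lambda'})\,(x)_j$, where $(x)_j$ is the falling factorial and $r_i$ the $i$th rook number. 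Substituting $x=-m$ rewrites $\prod_{u\in\lambda}(m+c(u))$ as $\sum_j (-1)^j r_j(B_{\lambda'})\, m^{\overline{n-j}}$ with $m^{\overline{n-j}} = (n-j)!\binom{m+n-j-1}{n-j}$; feeding in the standard rook-to-hit relation $r_j(B)(n-j)! = \sum_k h_k(B)\binom{k}{j}$ and exchanging the order of summation leaves the binomial identity
$$\sum_{j\ge0}(-1)^j\binom{k}{j}\binom{m+n-j-1}{n-j} = \binom{m+n-1-k}{n},$$
which drops out by extracting the coefficient of $z^n$ from $(1+z)^{m+n-1}\bigl(z/(1+z)\bigr)^k$.

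Finally I would compare the two expansions of $\prod_{u\in\lambda}(m+c(u))$. The functions $\binom{m+n-1-k}{n}$ for $k=0,\dots,n$ are linearly independent as polynomials in $m$ --- evaluating at $m=1,\dots,n+1$ produces a triangular matrix with unit diagonal --- so the coefficients match term by term; in particular this forces $h_n(B_{\lambda'})=0$ (consistent with $\textnormal{QYT}_{=n+1}(\lambda)=0$), and for $0\le k\le n-1$ it yields $h_k(B_{\lambda'}) = \textnormal{QYT}_{=k+1}(\lambda)\prod_{u\in\lambda}h(u)$, which is the assertion. I expect the main friction to be the bookkeeping in the Goldman--Joichi--White step: correctly converting the recipe for $B_{\lambda'}$ into column heights, checking that the product telescopes to $\prod_{u}(x-c(u))$, and tracking signs when passing between falling and rising factorials; everything after that is routine generating-function algebra.
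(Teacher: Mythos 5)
Your argument is correct and is a genuinely different (and more self-contained) route than the paper's, which simply obtains the identity by comparing the quasi-Yamanouchi and hit-number interpretations of the Jack coefficients $a_k((n),\lambda)$ from \cite{FPSAC}. In effect you have reconstructed the $q=1$ shadow of the paper's proof of the $q$-analogue (Theorem \ref{qytMaj}): your principal specialization of Gessel's expansion, $\textnormal{SSYT}_m(\lambda)=\sum_k \textnormal{QYT}_{=k+1}(\lambda)\binom{m+n-1-k}{n}$, plays the role of Stanley's $(P,\omega)$-partition identity $U_m=\sum_k{m+n-k\brack n}W_k$, and the classical Goldman--Joichi--White factorization together with the rook-to-hit conversion $r_j(B)(n-j)!=\sum_k h_k(B)\binom{k}{j}$ replaces Haglund's $q$-analogue. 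A pleasant feature of your version is that you work directly on $B_{\lambda'}$, whose column heights $i-1-c_{n+1-i}$ make the GJW product telescope to $\prod_u(x-c(u))$, so you never need the complementation step of Proposition \ref{Blambda} or the auxiliary board $B_\lambda\times 1$; the trade-off is that your route does not by itself produce the maj or charge refinements. All the individual steps check out --- the count $F_\sigma(1^m)=\binom{m+n-1-|\sigma|}{n}$, the sign bookkeeping at $x=-m$, the triangularity argument for linear independence, and the vanishing $h_n(B_{\lambda'})=0$ (no full rook placement fits on $B_{\lambda'}$ since $\lambda$ has a cell of content $0$). One small correction: in your verification of $\sum_j(-1)^j\binom{k}{j}\binom{m+n-j-1}{n-j}=\binom{m+n-1-k}{n}$, the kernel should be $(1+z)^{m+n-1}\bigl(1/(1+z)\bigr)^k$ rather than $(1+z)^{m+n-1}\bigl(z/(1+z)\bigr)^k$: writing $\binom{m+n-j-1}{n-j}=[z^n]\,z^j(1+z)^{m+n-j-1}$ gives $\sum_j\binom{k}{j}(-z/(1+z))^j=(1+z)^{-k}$, whence $[z^n](1+z)^{m+n-1-k}=\binom{m+n-1-k}{n}$ as desired; the identity you state is correct, only the displayed kernel is off.
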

In this section, we prove two $q$-analogues of this theorem, where the first weights each tableaux by its major index and the second by its charge. To do this, we use the theory of posets and $(P,\omega)$-partitions, which were introduced by Stanley \cite{OSAP}.

For a partition $\lambda = (\lambda_1,\ldots,\lambda_k)$, let $P_\lambda$ be the subposet of $\mathbb{N}\times\mathbb{N}$ such that $(i,j)\in P_\lambda$ if $1\leq j \leq k$, $1\leq i\leq \lambda_j$. Given a poset $P$ with $n$ elements, a \emph{labeling} $\omega$ is a map $\omega :P \to [n]$. It is called a \emph{natural} labeling if it is order preserving and \emph{strict} if it is order reversing. For $P_\lambda$, there are also \emph{column-strict} labelings, which are strict on columns and natural on rows.

\begin{figure}[ht]
    \centering
    \begin{tikzpicture}[line width = 1, scale=0.65]
    \filldraw[black] (0,3) circle (2pt) node[anchor=west]{};
    \filldraw[black] (1,2) circle (2pt) node[anchor=west]{};
    \filldraw[black] (2,1) circle (2pt) node[anchor=west]{};
    \filldraw[black] (3,0) circle (2pt) node[anchor=west]{};
    \filldraw[black] (1,4) circle (2pt) node[anchor=west]{};
    \filldraw[black] (2,3) circle (2pt) node[anchor=west]{};
    \filldraw[black] (3,2) circle (2pt) node[anchor=west]{};
    \filldraw[black] (4,1) circle (2pt) node[anchor=west]{};
    \filldraw[black] (5,2) circle (2pt) node[anchor=west]{};
\filldraw[black] (4,3) circle (2pt) node[anchor=west]{};
\filldraw[black] (6,3) circle (2pt) node[anchor=west]{};
\draw[black] (3,0) -- (4,1);
\draw[black] (5,2) -- (4,1);
\draw[black] (3,0) -- (6,3);
\draw[black] (3,0) -- (2,1);
\draw[black] (1,2) -- (2,1);
\draw[black] (1,2) -- (0,3);
\draw[black] (4,1) -- (1,4);
\draw[black] (2,1) -- (4,3);
\draw[black] (5,2) -- (4,3);
\draw[black] (1,2) -- (2,3);
\draw[black] (0,3) -- (1,4);
\end{tikzpicture}
\ \ \ \ \ \ \ 
\begin{tikzpicture}[line width = 1, scale=0.65]
\filldraw[black] (0,3) circle (2pt) node[anchor=west]{1};
\filldraw[black] (1,2) circle (2pt) node[anchor=west]{3};
\filldraw[black] (2,1) circle (2pt) node[anchor=west]{5};
\filldraw[black] (3,0) circle (2pt) node[anchor=west]{8};
\filldraw[black] (1,4) circle (2pt) node[anchor=west]{2};
\filldraw[black] (2,3) circle (2pt) node[anchor=west]{4};
\filldraw[black] (3,2) circle (2pt) node[anchor=west]{6};
\filldraw[black] (4,1) circle (2pt) node[anchor=west]{9};
\filldraw[black] (5,2) circle (2pt) node[anchor=west]{10};
\filldraw[black] (4,3) circle (2pt) node[anchor=west]{7};
\filldraw[black] (6,3) circle (2pt) node[anchor=west]{11};
\draw[black] (3,0) -- (4,1);
\draw[black] (5,2) -- (4,1);
\draw[black] (3,0) -- (6,3);
\draw[black] (3,0) -- (2,1);
\draw[black] (1,2) -- (2,1);
\draw[black] (1,2) -- (0,3);
\draw[black] (4,1) -- (1,4);
\draw[black] (2,1) -- (4,3);
\draw[black] (5,2) -- (4,3);
\draw[black] (1,2) -- (2,3);
\draw[black] (0,3) -- (1,4);
\end{tikzpicture}
    \caption{$P_{4,3,2,2}$ and a column-strict labeling of $P_{4,3,2,2}$.}
    \label{fig:my_label}
\end{figure}
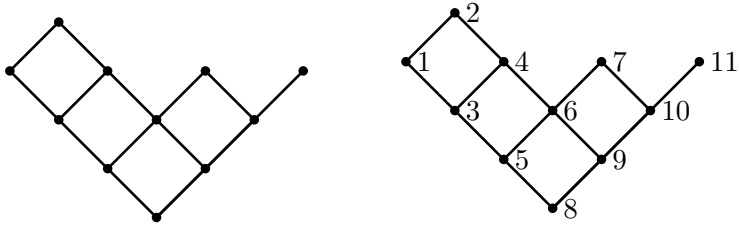

\noindent For a fixed $\omega$, a $(P,\omega)$\emph{-partition} of size $p$ is a map $\sigma : P \to \mathbb{N}_{\geq 0}$ satisfying 

1) $x\leq y \in P \implies \sigma(x)\geq \sigma(y)$, meaning $\sigma$ is order reversing.

2) $x < y \in P$ and $\omega(x)>\omega(y) \implies \sigma(x) > \sigma(y)$.

3) $|\sigma| = \sum_{x\in P} \sigma(x) = p$.

\noindent The values $\sigma(x)$ are called the parts of $\sigma$, and a $(P,\omega;m)$-partition is a $(P,\omega)$-partition with largest part at most $m$. $\mathcal{A}(P,\omega)$ denotes the set of $(P,\omega)$-partitions, and $\mathcal{A}(P,\omega;m)$ denotes the set of $(P,\omega;m)$-partitions, which have generating function
$$U_m(P,\omega;m) = \sum_{\sigma\in \mathcal{A}(P,\omega;m)} q^{|\sigma|}.$$
The $\omega$\emph{-separator} $\mathcal{L}(P,\omega)$ is the set of permutations in $S_n$ of the form $\omega(x_{i_1}),\ldots,\omega(x_{i_n})$ where $x_{i_1}<\ldots<x_{i_n}$ forms a linear extension of $P$. For each $0\leq k\leq n-1$, define
$$W_k(P,\omega) = W_k(P,\omega;q) = \sum_{\pi} q^{\textnormal{maj}(\pi)},$$
where the sum is over all $\pi \in \mathcal{L}(P,\omega)$ with $des(\pi) = k$. 

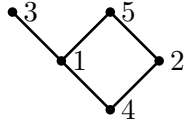
\begin{figure}[ht]
    \centering
    \begin{tikzpicture}[line width = 1, scale = 0.65]
    \filldraw[black] (0,2) circle (2pt) node[anchor=west]{3}; 
    \filldraw[black] (1,1) circle (2pt) node[anchor=west]{1}; 
    \filldraw[black] (2,0) circle (2pt) node[anchor=west]{4};
    \filldraw[black] (2,2) circle (2pt) node[anchor=west]{5}; 
    \filldraw[black] (3,1) circle (2pt) node[anchor=west]{2}; 
    \draw[black] (0,2) -- (2,0);
    \draw[black] (1,1) -- (2,2);
    \draw[black] (2,0) -- (3,1);
    \draw[black] (2,2) -- (3,1);
    \end{tikzpicture}
    \caption{$\mathcal{L}(P,\omega) = \{42153,42135,41235,41253,41325\}$ \\and $W_2(P,\omega;q) = q^3 + q^4 + q^5$.}
    \label{fig:my_label}
\end{figure}

\subsection{Major index formula}
Fix a partition $\lambda$, and let $\omega$ be a column-strict labeling on $P_\lambda$. By Proposition 21.3 of \cite{OSAP},
$$U_m(P_\lambda,\omega;m) = q^{n(\lambda)} \prod_{u\in\lambda} \frac{[m+c(u)+1]}{[h(u)]}.$$
By the definition of $W_k(P_\lambda,\omega)$, when $\omega$ is column-strict,
$$W_k(P_\lambda,\omega) = \sum_{T\in \textrm{QYT}_{=k+1}(\lambda)} q^{\textrm{maj}(T)}.$$
Proposition 8.2 of \cite{OSAP} gives
$$U_m(P_\lambda,\omega) = \sum_{k=0}^{n-1}{m+n-k\brack n} W_k(P_\lambda,\omega).$$
Then since there is no restriction on $m$, it follows that
$$\sum_{k=0}^{n-1}{x+n-k\brack n} W_k(P_\lambda,\omega) = q^{n(\lambda)}\prod_{u\in\lambda} \frac{[x+c(u)+1]}{[h(u)]}.$$
We then apply to the right hand side the following $q$-analogue \cite{Ha98} of the Goldman, Joichi, Write identity \cite{GJW75} 
$$\prod_{i=1}^n [x+b_i -i +1] = \sum_{k=0}^n {x+k\brack n} T_k(B),$$
where $B$ is a Ferrers board with column heights $b_i$. Comparing coefficients of ${x+k\brack n}$ gives the following theorem.
\begin{theorem}\label{qytMaj}
Given a partition $\lambda$ and $0\leq k \leq n-1$,
$$\sum_{T\in \textrm{QYT}_{=k+1}(\lambda)} q^{\textrm{maj}(T)} = \frac{q^{n(\lambda)}}{\prod_{u\in\lambda}[h(u)]}T_{n-k}(B_\lambda \times 1).$$
\end{theorem}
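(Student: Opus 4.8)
The proof is essentially contained in the discussion above; what remains is to package it and to justify the one step — the final coefficient comparison — that is merely asserted there, so the plan is as follows. First I would record the identity just obtained: combining Proposition 21.3 and Proposition 8.2 of \cite{OSAP} for a column-strict labeling $\omega$ of $P_\lambda$, and using that $m$ ranges over all nonnegative integers while both sides are polynomials in $q^m$ of bounded degree, one gets an identity of rational functions
$$\sum_{k=0}^{n-1}{x+n-k\brack n}W_k(P_\lambda,\omega) \;=\; \frac{q^{n(\lambda)}}{\prod_{u\in\lambda}[h(u)]}\prod_{u\in\lambda}[x+c(u)+1],$$
valid with $x$ a formal indeterminate (i.e.\ replacing $q^m$ by $q^x$).

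Next I would match the product $\prod_{u\in\lambda}[x+c(u)+1]$ to the left side of the $q$-analogue \cite{Ha98} of the Goldman--Joichi--White identity applied to the board $B=B_\lambda\times 1$. By construction $B_\lambda$ has column heights $c_i+i-1$, where $c_1\geq\cdots\geq c_n$ is the weakly decreasing list of contents of $\lambda$; hence $B_\lambda\times 1$ has heights $b_i=c_i+i$, so $b_i-i+1=c_i+1$ and therefore $\prod_{i=1}^n[x+b_i-i+1]=\prod_{u\in\lambda}[x+c(u)+1]$. Substituting $\prod_{u\in\lambda}[x+c(u)+1]=\sum_{k=0}^n{x+k\brack n}T_k(B_\lambda\times 1)$ and reindexing the left sum by $j=n-k$ turns the displayed identity into
$$\sum_{j=1}^{n}{x+j\brack n}\,W_{n-j}(P_\lambda,\omega) \;=\; \frac{q^{n(\lambda)}}{\prod_{u\in\lambda}[h(u)]}\sum_{j=0}^{n}{x+j\brack n}\,T_j(B_\lambda\times 1).$$

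The key step is then to compare coefficients of ${x+j\brack n}$ on the two sides, for which the $n+1$ functions $x\mapsto{x+j\brack n}$, $0\leq j\leq n$, must be linearly independent over $\mathbb{Q}(q)$. I would get this by specializing $x$ to $j'\in\{0,1,\dots,n\}$: the matrix with entries ${j'+j\brack n}$ is anti-triangular, vanishing whenever $j'+j<n$ and equal to $1$ when $j'+j=n$, hence invertible. Matching coefficients, the $j=0$ term forces $T_0(B_\lambda\times 1)=0$ — which is consistent, since by Proposition~\ref{Blambda} and Hall's theorem no permutation avoids $B_\lambda\times 1$ (equivalently $B_\lambda$, and likewise $B_{\lambda'}$, has no column of full height $n$, as its tallest column has height $n-\ell(\lambda)$) — while for $1\leq j\leq n$, writing $k=n-j$, one obtains $W_k(P_\lambda,\omega)=\frac{q^{n(\lambda)}}{\prod_{u\in\lambda}[h(u)]}T_{n-k}(B_\lambda\times 1)$. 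Since $\omega$ is column-strict, $\mathcal{L}(P_\lambda,\omega)$ corresponds under standardization to $\textnormal{SYT}(\lambda)$ with descent set and major index preserved and with number of runs equal to the number of descents plus one, so $W_k(P_\lambda,\omega)=\sum_{T\in\textnormal{QYT}_{=k+1}(\lambda)}q^{\textnormal{maj}(T)}$, which is the theorem.

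I expect the main obstacle to be precisely this last extraction: one must be careful that both Goldman--Joichi--White-type expansions are written against the same linearly independent family $\{{x+j\brack n}\}_{j=0}^n$, keep the reindexing $j\leftrightarrow n-k$ straight, and recognize that the absence of a top term ${x\brack n}$ on the $W$-side is not a contradiction but exactly the vanishing $T_0(B_\lambda\times 1)=0$; everything else is an assembly of cited results.
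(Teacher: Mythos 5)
Your proposal is correct and follows essentially the same route as the paper: combine Stanley's Propositions 21.3 and 8.2 for a column-strict labeling of $P_\lambda$, promote the resulting identity to a polynomial identity in $x$, apply Haglund's $q$-analogue of the Goldman--Joichi--White identity to $B_\lambda\times 1$ (whose heights $b_i=c_i+i$ give $b_i-i+1=c_i+1$), and compare coefficients of ${x+j\brack n}$. The only difference is that you explicitly justify the two steps the paper merely asserts — the passage from all integers $m$ to formal $x$ and the linear independence of $\bigl\{{x+j\brack n}\bigr\}_{j=0}^{n}$ via the anti-triangular evaluation matrix — and you correctly observe that the missing $j=0$ term is accounted for by $T_0(B_\lambda\times 1)=0$.
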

Setting $q=1$ and applying Proposition \ref{Blambda} recovers Theorem \ref{qytHit}.
We note that since $T_k(B)$ is Mahonian \cite{Dw98} for a Ferrers board, summing over $k$ gives a nice (known) $q$-analogue of the hook-length formula,
$$\sum_{T\in\textrm{SYT}(\lambda)} q^{\textrm{maj}(T)} = \frac{q^{n(\lambda)}[n]!}{\prod_{u\in\lambda} [h(u)]}.$$
We briefly attempted to prove Theorem \ref{qytMaj} bijectively but were unsuccessful. It would be nice to know what such a bijective algorithm might look like, and such an algorithm could be an interesting project to revisit in the future.

\subsection{Charge formula}
Fix a permutation $\lambda$ of size $n$, and let $\omega$ be a column-strict labeling on $P_\lambda$. We write $P_\lambda^*$ for the dual of $P_\lambda$ and write $\omega^*$ for the labeling defined by $\omega^*(x_i) = n+1-\omega(x_i)$ for all $x_i\in P_\lambda$. 
Proposition 12.1 of \cite{OSAP} details what this dualization on $P_\lambda$ and $\omega$ does to $W_k$, which is that
$$W_k(P_\lambda^*,\omega^*;q) = q^{nk} W_k(P_\lambda,\omega;\frac{1}{q}).$$
We note that since there are $k$ descents,
$$q^{nk}W_k(P_\lambda,\omega;\frac{1}{q}) = \sum_{T\in \textrm{QYT}_{=k+1}(\lambda)}q^{nk-\textrm{maj}(T)} = \sum_{T\in \textrm{QYT}_{=k+1}(\lambda)} \sum_{i\in\textrm{Des}(T)} q^{n-i}.$$
Then since a descent at position $i$ increments the charge value of the $n-i$ remaining entries by one, we get
$$W_k(P_\lambda^*,\omega^*;q) = \sum_{T\in\textrm{QYT}_{=k+1}(\lambda)} q^{ch(T)}.$$
Using the facts that $[k]\mapsto [k]\frac{1}{q^{k-1}}$ when substituting $1/q$ and that $\sum_{u\in\lambda} h(u) = n + n(\lambda) + n(\lambda')$, we get
$$W_k(P_\lambda,\omega;\frac{1}{q}) = \frac{q^{n(\lambda')}}{\prod_{u\in\lambda} [h(u)]} T_{n-k}^*(B_\lambda\times 1),$$
where $T_k^*(B_\lambda\times 1)$ gives a weight of $1/q$ to each circle instead of $q$. Multiplying this by $q^{{n\choose 2}}$ changes the circles to a $q^0$ weight and empty squares to a $q^1$ weight, which is identical to drawing circles downwards instead of upwards from crosses and giving circles a $q^1$ weight. Then by Proposition \ref{Blambda}, taking the complement of the board and reflecting vertically gives $B_{\lambda'}$ up to column permutation. By Theorem 7.13 of \cite{Dw98}, $T_k(B)$ is invariant on column permutations for Ferrers boards, so it follows that
$$T_{n-k}^*(B_\lambda\times 1) = \frac{T_k(B_{\lambda'})}{q^{{n\choose 2}}}.$$
This gives the following result, which clearly reduces to Theorem \ref{qytHit} when $q=1$. 
\begin{theorem}\label{qytCharge} 
Given a partition $\lambda$ and $0\leq k \leq n-1$,
$$\sum_{T\in \textrm{QYT}_{=k+1}(\lambda)} q^{ch(T)} = \frac{q^{nk+n(\lambda')-{n\choose 2}}}{\prod_{u\in\lambda} [h(u)]} T_k(B_{\lambda'}).$$
\end{theorem}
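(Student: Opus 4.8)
The plan is to run the same chain of identities that produced Theorem~\ref{qytMaj}, but now starting from the dual poset $P_\lambda^*$ with the dual labeling $\omega^*$, and then to translate the resulting $q$-hit statement on $B_\lambda\times 1$ into a statement on $B_{\lambda'}$. First I would establish the three ``bookkeeping'' equalities announced just above the theorem: (i) $W_k(P_\lambda^*,\omega^*;q) = q^{nk}W_k(P_\lambda,\omega;1/q)$ via Proposition~12.1 of \cite{OSAP}; (ii) the interpretation $q^{nk}W_k(P_\lambda,\omega;1/q) = \sum_{T\in\textrm{QYT}_{=k+1}(\lambda)} q^{ch(T)}$, which follows because a descent at position $i$ contributes $n-i$ to the charge, so summing $q^{n-i}$ over $i\in\textrm{Des}(T)$ and exponentiating additively reproduces $ch(T) = \sum_{i\in\textrm{Des}(T)}(n-i)$; and (iii) evaluating $W_k(P_\lambda,\omega;1/q)$ in closed form by substituting $q\mapsto 1/q$ into Theorem~\ref{qytMaj}, using $[k]\mapsto [k]/q^{k-1}$ and the identity $\sum_{u\in\lambda}h(u) = n + n(\lambda) + n(\lambda')$ to track the power of $q$ that comes out of the denominator $\prod_u [h(u)]$ and the prefactor $q^{n(\lambda)}$.

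The second half is the board manipulation. After step (iii) one has $W_k(P_\lambda,\omega;1/q) = \frac{q^{n(\lambda')}}{\prod_{u\in\lambda}[h(u)]}\,T_{n-k}^*(B_\lambda\times 1)$, where $T^*$ is the $q$-hit polynomial with each circle weighted $1/q$ rather than $q$. I would then argue that multiplying by $q^{\binom{n}{2}}$ turns every circle into a $q^0$ and every non-circled square into a $q^1$ — note there are exactly $\binom{n}{2}$ relevant squares in Dworkin's construction on an $n\times n$ array for a Ferrers board of this type — and that this reweighting is exactly what one gets by running Dworkin's circling procedure \emph{downward} from each cross with circles weighted $q^1$. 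At this point Proposition~\ref{Blambda} says the complement of $B_\lambda\times 1$, reflected vertically, is $B_{\lambda'}$ up to a permutation of columns, and reflecting/complementing interchanges ``circles drawn downward, stopping at the board'' with ``circles drawn upward, stopping at the complement''; combined with Theorem~7.13 of \cite{Dw98} (invariance of $T_k$ under column permutations for Ferrers boards) this yields $T_{n-k}^*(B_\lambda\times 1) = T_k(B_{\lambda'})/q^{\binom{n}{2}}$. Assembling (i)--(iii) with this identity and collecting the exponents $nk$, $n(\lambda')$, and $-\binom{n}{2}$ gives the claimed formula.

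The main obstacle I expect is the board-duality bookkeeping in the second paragraph: one has to be careful that the $q$-weighted version of ``complementation plus rotation/reflection'' in Proposition~\ref{Blambda} really does send Dworkin's upward circling on $B_\lambda\times 1$ to downward circling on a column-permutation of $B_{\lambda'}$, including that bullets (squares to the right of a cross) behave correctly under the reflection, and that no off-by-one error creeps into the count of squares that switch from weight $q^{-1}$ to weight $q^0$ versus $q^0$ to $q^1$. The $q=1$ check is reassuring but does not catch sign/shift errors in the $q$-exponent, so I would verify the exponent $nk + n(\lambda') - \binom{n}{2}$ on a small example such as $\lambda = (2,1)$ before trusting it. Everything else — the substitution $q\mapsto 1/q$ and the hook-sum identity — is routine once the statements of Propositions~8.2, 12.1, and 21.3 of \cite{OSAP} are in hand.
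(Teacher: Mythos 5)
Your proposal reproduces the paper's own argument essentially step for step: Proposition 12.1 of \cite{OSAP} for the dualization, the identification $nk-\textnormal{maj}(T)=\sum_{i\in\textnormal{Des}(T)}(n-i)=ch(T)$, the $q\mapsto 1/q$ substitution tracked via $[k]\mapsto[k]/q^{k-1}$ and $\sum_{u\in\lambda}h(u)=n+n(\lambda)+n(\lambda')$, and finally the $q^{\binom{n}{2}}$ reweighting together with Proposition \ref{Blambda} and Dworkin's column-permutation invariance to convert $T^*_{n-k}(B_\lambda\times 1)$ into $T_k(B_{\lambda'})/q^{\binom{n}{2}}$. The approach and all the key intermediate identities are the same as in the paper, and your count of $\binom{n}{2}$ non-cross, non-bullet squares is correct.
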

Summing over $k$ in this case also gives some sort of $q$-analogue of the hook-length formula, although it does not appear to immediately give a nice form.

\section{A summation formula}
We prove the following theorem in two ways, first with a $q$-hit number identity and then using $(P,\omega)$-partitions. This gives a relatively clean enumeration for quasi-Yamanouchi tableaux compared to the product formula of \cite{Wa16}, the downside being that it is not a positive summation. 
\begin{theorem}\label{qytSum}
Given a partition $\lambda$ and $0\leq k \leq n-1$
$$\textnormal{QYT}_{=k+1}(\lambda) = \sum_{m=0}^k{n+1\choose k-m}(-1)^{k-m} \textnormal{SSYT}_{m+1}(\lambda).$$
\end{theorem}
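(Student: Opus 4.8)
The plan is to derive Theorem~\ref{qytSum} from the hit-number formula of Theorem~\ref{qytHit} together with the Goldman--Joichi--White identity already invoked in Section~3. First I would recall that setting $q=1$ in the GJW $q$-analogue gives $\prod_{i=1}^n (x+b_i-i+1) = \sum_{j=0}^n \binom{x+j}{n} h_j(B)$ for a Ferrers board $B$ with column heights $b_i$. Applying this to $B = B_{\lambda'}$ and using Proposition~\ref{Blambda} (so that $B_{\lambda'}$ is, up to rotation, the complement of $B_\lambda\times 1$), combined with the hook-content formula for $\prod_{u\in\lambda}(m+c(u))$, lets me express the polynomial $\prod_{u\in\lambda} h(u)\cdot\textnormal{SSYT}_{m+1}(\lambda) = \prod_{u\in\lambda}(m+1+c(u))$ as a linear combination $\sum_j \binom{m+j}{n} h_j(B_{\lambda'})$ after the appropriate reindexing of the board heights. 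Here the key bookkeeping point is matching the heights: the contents of $\lambda$ arranged in weakly decreasing order, shifted to $c_i+i-1$, give the columns of $B_\lambda$, and passing to $B_{\lambda'}\times 1$'s complement converts this into the product $\prod_i (x + b_i' - i + 1)$ with $b_i'$ the heights of $B_{\lambda'}$, so that $\prod_{u\in\lambda}(x+1+c(u))$ and $\sum_j \binom{x+j}{n} h_j(B_{\lambda'})$ are literally the same polynomial in $x$ up to a shift $x\mapsto x$ or $x\mapsto x+1$ that I would pin down carefully.

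Second I would substitute $x = m$ (an integer), so that $\prod_{u\in\lambda} h(u)\cdot \textnormal{SSYT}_{m+1}(\lambda) = \sum_{j=0}^n \binom{m+j}{n} h_j(B_{\lambda'})$, and then invert this relation. Since $\binom{m+j}{n}$ vanishes unless $m+j \geq n$, i.e. $j \geq n-m$, and Theorem~\ref{qytHit} identifies $h_j(B_{\lambda'}) = \prod_{u\in\lambda} h(u)\cdot \textnormal{QYT}_{=n-j+1}(\lambda)$, I can rewrite the right-hand side as a sum over the QYT enumerators. Reindexing by $k+1 = n-j+1$, i.e. $j = n-k$, turns $\sum_j \binom{m+j}{n} h_j(B_{\lambda'})$ into $\prod_{u\in\lambda}h(u)\sum_{k} \binom{m+n-k}{n}\textnormal{QYT}_{=k+1}(\lambda)$, which after cancelling $\prod_{u\in\lambda} h(u)$ gives the clean identity $\textnormal{SSYT}_{m+1}(\lambda) = \sum_{k=0}^{n-1}\binom{m+n-k}{n}\textnormal{QYT}_{=k+1}(\lambda)$ (noting $\binom{m+n-k}{n} = 0$ when $k > m$, so the sum is effectively over $0\le k\le m$). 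This is exactly the $q=1$ specialization of the relation $U_m(P_\lambda,\omega) = \sum_k \binom{m+n-k}{n} W_k(P_\lambda,\omega)$ from Proposition 8.2 of \cite{OSAP}, which I could alternatively cite directly as the starting point.

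Third, and this is the actual content of the theorem, I would invert the triangular system $\textnormal{SSYT}_{m+1}(\lambda) = \sum_{k=0}^{m}\binom{m+n-k}{n}\textnormal{QYT}_{=k+1}(\lambda)$ to solve for $\textnormal{QYT}_{=k+1}(\lambda)$. The matrix with entries $\binom{m+n-k}{n}$ (indexed by $m,k$) is lower-triangular with $1$'s on the diagonal (the $m=k$ entry is $\binom{n}{n}=1$), so it is invertible over $\mathbb{Z}$, and the inverse has a known closed form. Concretely, writing $a_m = \textnormal{SSYT}_{m+1}(\lambda)$ and $b_k = \textnormal{QYT}_{=k+1}(\lambda)$, the relation $a_m = \sum_k \binom{m+n-k}{n} b_k$ is a binomial-transform-type identity whose inversion is $b_k = \sum_{m=0}^k (-1)^{k-m}\binom{n+1}{k-m} a_m$; I would verify the inverse pair by the standard generating-function argument, namely that $\sum_{i\geq 0}\binom{x+i}{n} t^i \cdot (1-t)^{n+1} = $ (a polynomial identity that makes the convolution of the two coefficient sequences collapse to $\delta_{m,k}$), equivalently checking $\sum_{j}(-1)^{j}\binom{n+1}{j}\binom{m-j+n-k}{n} = \delta_{mk}$ for $0\le k\le m$, which is a routine application of the Vandermonde/upper-negation identities.

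The main obstacle I anticipate is not the inversion — that is a standard finite-difference computation, since $\binom{n+1}{k-m}(-1)^{k-m}$ is exactly the coefficient extracting the $(k-m)$-th finite difference appropriate to a degree-$n$ polynomial setup — but rather pinning down the precise index shifts so that everything lines up: which board ($B_\lambda\times 1$ versus $B_{\lambda'}$), whether the GJW identity should be evaluated at $x=m$ or $x=m+1$ or $x=m-n$, and reconciling the $h_k \leftrightarrow T_k|_{q=1}$ and $k \leftrightarrow n-k$ conventions between Dworkin's hit numbers, Proposition~\ref{Blambda}, and Theorem~\ref{qytHit}. To avoid this, the cleanest route is to bypass the board machinery entirely: take as given the (already essentially stated) identity $\textnormal{SSYT}_{m+1}(\lambda) = \sum_{k=0}^{n-1}\binom{m+n-k}{n}\textnormal{QYT}_{=k+1}(\lambda)$ — which follows immediately from the destandardization bijection by counting $\textnormal{SSYT}_{m+1}(\lambda)$ according to the number of distinct values used, since a QYT with maximum value $k+1$ extends to an SSYT with entries $\le m+1$ in exactly $\binom{m+n-k}{n}$ ways by choosing which $k+1$ values among $\{1,\dots,m+1\}$ to keep and how to pad — and then perform the binomial inversion, which is where I would spend the written proof.
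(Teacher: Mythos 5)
Your proposed route is correct and coincides in substance with the paper's second proof: there, Theorem~\ref{qytSum} is obtained by citing Stanley's Propositions 8.2 and 8.4 for $(P_\lambda,\omega)$-partitions with $\omega$ column-strict, which at $q=1$ are exactly your forward relation $\textnormal{SSYT}_{m+1}(\lambda)=\sum_{k}\binom{m+n-k}{n}\textnormal{QYT}_{=k+1}(\lambda)$ and its binomial inversion. Your write-up is marginally more self-contained, since you justify the forward relation directly from the destandardization bijection (via $F_{\sigma}(1^{m+1})=\binom{m+n-k}{n}$ for $|\sigma|=k$) and carry out the inversion explicitly using $\sum_{r\geq 0}\binom{n+r}{n}t^{r}=(1-t)^{-(n+1)}$ rather than quoting Proposition 8.4, but the underlying argument is the same.
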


\begin{proof}[First proof.]

Setting $t=n$ in equation (24) in \cite{Ha98} gives 
$$\frac{T_{n-k}(B)}{\prod_{i=1}^n [d_i]!} = \sum_{m=0}^k {n+1 \brack k-m} (-1)^{k-m} q^{{k-m\choose 2}} \prod_{i=1}^n{m+H_i-D_i + d_i \brack d_i},$$
where $d_i =1$ for all $i$, $D_i = i$, and $H_i$ is the height of the $i$th column of $B$. By the way $B_\lambda$ is constructed, the sequence $H_i-D_i$ for $B_\lambda\times1$ becomes exactly the cell contents of $\lambda$, so setting $q=1$ gives
$$h_{n-k}(B_\lambda \times 1) =\sum_{m=0}^k {n+1\choose k-m}(-1)^{k-m}\prod_{i=1}^n (m+c_i+1).$$
Substituting this into Theorem \ref{qytHit} after applying Proposition \ref{Blambda} and comparing with the hook-content formula proves Theorem \ref{qytSum}.
\end{proof}

\begin{proof}[Second proof.]
When $\omega$ is a column-strict labeling on $P_\lambda$, $\mathcal{A}(P_\lambda,\omega;m)$ is the set of SSYT$_{m+1}(\lambda)$ with each entry decremented by one. Therefore, setting $q=1$ in $U_m(P_\lambda,\omega)$ gives $|\textrm{SSYT}_{m+1}(\lambda)|$. Proposition 8.4 in \cite{OSAP} says that
$$W_k(P,\omega) = \sum_{m=0}^k (-1)^m q^{{n\choose 2}} {n+1\brack m} U_{k-m}(P,\omega).$$
Then setting $q=1$, and reversing the order of summation proves Theorem \ref{qytSum}.
\end{proof}

\section{The polynomials $P_{n,k}$}

For any partition $\lambda$ with $|\lambda|=n$, we can actually express QYT$_{=k+1}(\lambda)$ in terms of certain symmetric functions $P_{n,k}$. We begin with Lemma 4 of \cite{Ha98}, where we set $q=1$, $t=n$, $d_i=1$, $e_i \in \{0,1\}$, $E_i$ the partial sums of the $e_i$, and $D_i = i$. We also recall as before that for $B=B_\lambda\times 1$, we have $H_i-D_i=c_i$, the cell contents of $\lambda$ in some order. After all of that, we get
$$h_{n-k}(B_\lambda\times1) = \sum_{e_1+\cdots+e_n = k} \prod_{i=1}^n {c_i+E_i+d_i-e_i\choose d_i-e_i}{i-1-c_i-E_i+e_i \choose e_i}.$$
Since exactly one of $d_i-e_i$ or $e_i$ are $1$ and the other is $0$, we get
$$h_{n-k}(B_\lambda\times1) = \sum_{e_1+\cdots+e_n = k} \prod_{i=1}^n (c_i+E_i+1)^{d_i-e_i}(i-c_i-E_i)^{e_i}.$$
This is the same as summing over weighted lattice paths with $n$ steps from $(0,0)$ to $(k,n-k)$. Let $E_i$ count the cumulative east steps and $N_i=i-E_i$ count the cumulative north steps. Then for each path, weight the $i$th step by $x_i+E_i+1$ if it is a north step and $N_i-x_i$ if it is an east step, and let the weight of a path be the product of the weights of its steps.

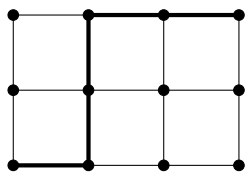
\begin{figure}[ht]
    \centering
    \begin{tikzpicture}
    \filldraw[black] (0,0) circle (2pt);
    \filldraw[black] (1,0) circle (2pt);
    \filldraw[black] (2,0) circle (2pt);
    \filldraw[black] (3,0) circle (2pt);
    \filldraw[black] (0,1) circle (2pt);
    \filldraw[black] (1,1) circle (2pt);
    \filldraw[black] (2,1) circle (2pt);
    \filldraw[black] (3,1) circle (2pt);
    \filldraw[black] (0,2) circle (2pt);
    \filldraw[black] (1,2) circle (2pt);
    \filldraw[black] (2,2) circle (2pt);
    \filldraw[black] (3,2) circle (2pt);
    \draw[black] (0,0) -- (0,2);
    \draw[black] (1,0) -- (1,2);
    \draw[black] (2,0) -- (2,2);
    \draw[black] (3,0) -- (3,2);
    \draw[black] (0,0) -- (3,0);
    \draw[black] (0,1) -- (3,1);
    \draw[black] (0,2) -- (3,2);
    \draw[black, ultra thick] (0,0) -- (1,0);
    \draw[black, ultra thick] (1,0) -- (1,1);
    \draw[black, ultra thick] (1,1) -- (1,2);
    \draw[black, ultra thick] (1,2) -- (3,2);
    \end{tikzpicture}
    \caption{A path with weight $(-x_1)(x_2+2)(x_3+2)(2-x_4)(2-x_5)$.}
    \label{fig:my_label}
\end{figure}

Let $P_{n,k}(x_1,\ldots,x_n)$ denote the sum of the weights of all such paths. This gives the following weighted lattice path interpretation for QYT enumeration. 
\begin{theorem}\label{qytLattice}
Given a partition $\lambda$ of $n$ with contents $c_1,\ldots,c_n$ and $0\leq k\leq n$,
$$\textnormal{QYT}_{=k+1}(\lambda) = \frac{P_{n,k}(c_1,\ldots,c_n)}{\prod_{u\in\lambda}h(u)}.$$
\end{theorem}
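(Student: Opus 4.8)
The plan is essentially to retrace the chain of identities that was already set up in the paragraph preceding the statement, and observe that the final equality with $\textnormal{QYT}_{=k+1}(\lambda)$ falls out of Theorem~\ref{qytHit} combined with the hook-content formula. So the proof is short and amounts to assembling pieces.

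First I would recall the already-derived expression
$$h_{n-k}(B_\lambda\times1) = P_{n,k}(c_1,\ldots,c_n),$$
which is exactly what the displayed computation in this section establishes: starting from Lemma~4 of \cite{Ha98} with the specializations $q=1$, $t=n$, $d_i=1$, $e_i\in\{0,1\}$, $D_i=i$, and using $H_i - D_i = c_i$ for $B_\lambda\times1$, one arrives at a sum of products $\prod_{i=1}^n (c_i+E_i+1)^{d_i-e_i}(i-c_i-E_i)^{e_i}$ over all binary sequences $e_1+\cdots+e_n=k$; reading $E_i$ as the number of east steps among the first $i$ steps (so $e_i=1$ marks an east step) identifies this sum with the sum of path weights, i.e. with $P_{n,k}(c_1,\ldots,c_n)$ evaluated at the contents. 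The one small thing to check carefully here is the bookkeeping of which factor is attached to a north step versus an east step, and that the endpoint of a path with exactly $k$ east steps among $n$ total steps is indeed $(k,n-k)$ — this matches the weighting described in the text ($x_i+E_i+1$ for a north step, $N_i-x_i$ for an east step, with $N_i=i-E_i$).

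Next I would invoke Theorem~\ref{qytHit}, which says $\textnormal{QYT}_{=k+1}(\lambda) = h_k(B_{\lambda'})/\prod_{u\in\lambda}h(u)$, together with Proposition~\ref{Blambda}, which identifies $B_{\lambda'}$ with the complement of $B_\lambda\times1$ up to rotation. Since $h_k$ of a Ferrers board and $h_{n-k}$ of the rotated complement coincide (a permutation with $k$ hits on a board has $n-k$ hits on the complement, and rotation is a bijection on $S_n$), we get $h_k(B_{\lambda'}) = h_{n-k}(B_\lambda\times1)$. Strictly speaking this complement/rotation fact is the same one already used to pass between Theorem~\ref{qytMaj} and Theorem~\ref{qytHit}, so I would just cite it. Combining, $\textnormal{QYT}_{=k+1}(\lambda) = h_{n-k}(B_\lambda\times1)/\prod_{u\in\lambda}h(u) = P_{n,k}(c_1,\ldots,c_n)/\prod_{u\in\lambda}h(u)$, which is the claim.

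The main obstacle, such as it is, is not conceptual but notational: keeping straight the three different indexings of the contents — as $c(u)$ ranging over cells $u\in\lambda$, as the weakly decreasing list $c_1,\ldots,c_n$ used to build $B_\lambda$, and as the arguments fed into the symmetric polynomial $P_{n,k}$ — and confirming that $P_{n,k}$, being symmetric in its arguments, does not care about the order in which the contents are listed. Since $P_{n,k}$ is genuinely a symmetric function (summing over all lattice paths treats the variables symmetrically after the fact, as one can see from the product formula structure, or one simply notes the claim only needs the multiset of contents), the ambiguity in the ordering of $c_1,\ldots,c_n$ is harmless. I would state that symmetry explicitly as the one nontrivial remark, and otherwise the proof is a two-line concatenation of the section's displayed derivation with Theorem~\ref{qytHit} and Proposition~\ref{Blambda}.
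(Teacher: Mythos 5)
Your proposal is correct and follows essentially the same route as the paper: the displayed computation from Lemma~4 of Haglund establishes $h_{n-k}(B_\lambda\times 1)=P_{n,k}(c_1,\ldots,c_n)$, and the theorem then follows by combining this with Theorem~\ref{qytHit} via the complement-and-rotation identification of Proposition~\ref{Blambda}, exactly as you describe. Your only overstatement is treating the symmetry of $P_{n,k}$ as immediate from the path-sum structure (the paper needs an induction for that), but this is harmless here since the construction of $B_\lambda$ already fixes the contents in weakly decreasing order.
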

Such paths can be split recursively into ones that end on an east step and ones that end on a north step.
\begin{proposition}
The polynomials $P_{n,k}$ satisfy the relation
$$P_{n,k}(x_1,\ldots,x_n) = (x_n + k + 1) P_{n-1,k}(x_1,\ldots,x_{n-1}) + (n-k-x_n)P_{n-1,k-1}(x_1,\ldots,x_{n-1}).$$
\end{proposition}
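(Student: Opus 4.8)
The plan is to prove the recursion directly from the combinatorial definition of $P_{n,k}$ as a generating polynomial over weighted lattice paths, by conditioning on the last step of each path. Recall that $P_{n,k}(x_1,\ldots,x_n)$ is the sum over all lattice paths with $n$ steps from $(0,0)$ to $(k,n-k)$, where the $i$th step contributes a factor $x_i + E_i + 1$ if it is a north step and $N_i - x_i$ if it is an east step, with $E_i, N_i$ the cumulative counts of east and north steps after $i$ steps. The key observation is that deleting the last step of such a path yields a path of length $n-1$, and the weights of the first $n-1$ steps are unchanged because $E_i$ and $N_i$ for $i \le n-1$ do not depend on the final step.

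First I would split the set of paths to $(k, n-k)$ according to whether the last ($n$th) step is north or east. If the last step is east, the path necessarily passes through $(k-1, n-k)$ after $n-1$ steps, so it consists of an arbitrary path to $(k-1, n-k)$ followed by one east step; at that final step we have $E_n = k$ and $N_n = n - k$, wait — more carefully, after the east step $N_n = n-k$ is the cumulative north count, so the east-step weight is $N_n - x_n = (n-k) - x_n$. Hmm, but I should double check against the stated recursion, which has $(n - k - x_n)$ multiplying $P_{n-1,k-1}$, and indeed a path ending in an east step arrives at $(k,n-k)$ from $(k-1,n-k)$, matching $P_{n-1,k-1}$; the cumulative north steps is $n-k$, so the weight is $(n-k) - x_n$. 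Good. Similarly, if the last step is north, the path passes through $(k, n-k-1)$ after $n-1$ steps, giving a factor $P_{n-1,k}(x_1,\ldots,x_{n-1})$, and the final north step has $E_n = k$ cumulative east steps, so its weight is $x_n + E_n + 1 = x_n + k + 1$. Summing the two contributions gives exactly
$$P_{n,k}(x_1,\ldots,x_n) = (x_n + k + 1)\,P_{n-1,k}(x_1,\ldots,x_{n-1}) + (n - k - x_n)\,P_{n-1,k-1}(x_1,\ldots,x_{n-1}),$$
which is the claimed identity.

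The main point to be careful about — and the only real obstacle — is verifying that the weight contributed by each of the first $n-1$ steps of a truncated path agrees with its weight as a step of the original length-$n$ path, i.e. that the cumulative statistics $E_i$ and $N_i$ are intrinsic to the first $i$ steps and insensitive to what comes afterward. This is immediate from their definition, so the argument is genuinely just a clean bijective decomposition; there is no delicate estimate or case analysis beyond keeping track of which corner the truncated path ends at and reading off $E_n$ and $N_n$ correctly. One should also note the boundary behavior (e.g. $P_{n-1,-1} = 0$ and $P_{n-1,n} = 0$ when the corresponding corner is unreachable), which makes the recursion valid for all $0 \le k \le n$ without separate treatment of the extreme cases.
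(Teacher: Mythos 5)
Your proof is correct and is exactly the argument the paper intends: the paper justifies the recursion with the single remark that paths can be split according to whether the last step is east or north, and you have carried out the same decomposition, correctly reading off the last-step weights $x_n+E_n+1=x_n+k+1$ and $N_n-x_n=n-k-x_n$ and noting that the earlier weights are intrinsic to the truncated path.
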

We can use this to get a more concrete idea of what these polynomials look like. By their construction, it is not obvious that these polynomials are symmetric, but computing small cases seems to indicate they are.
\begin{align*}
P_{1,0}(x_1) =& e_1(x_1)+1 \\
P_{1,1}(x_1) =& e_1(x_1)\\
P_{2,0}(x_1,x_2) =& e_2(x_1,x_2) + e_1(x_1,x_2) + 1\\
P_{2,1}(x_1,x_2) =& -2e_2(x_1,x_2) -e_1(x_1,x_2) +1\\
P_{2,2}(x_1,x_2) =& e_2(x_1,x_2)\\
P_{3,0}(x_1,x_2,x_3) =& e_3(x_1,x_2,x_3) + e_2(x_1,x_2,x_3) + e_1(x_1,x_2,x_3) + 1\\
P_{3,1}(x_1,x_2,x_3) =& -3e_3(x_1,x_2,x_3) -2 e_2(x_1,x_2,x_3) +4\\
P_{3,2}(x_1,x_2,x_3) =& 3e_3(x_1,x_2,x_3)+e_2(x_1,x_2,x_3)-e_1(x_1,x_2,x_3)+1\\
P_{3,3}(x_1,x_2,x_3) =& e_3(x_1,x_2,x_3)
\end{align*}

Let $a(n,k,m)$ denote the coefficient of $e_m$ in $P_{n,k}$, and assume that $P_{i,k}$ is symmetric for $i<n$. Using the recursion, it is clear that the coefficient of the degree $m$ monomials containing $x_n$ in $P_{n,k}$ is $a(n-1,k,m-1) - a(n-1,k-1,m-1)$ and that the coefficient of the degree $m$ monomials not containing $x_n$ is $(k+1)a(n-1,k,m)+(n-k)a(n-1,k-1,m)$. Then to show that $P_{n,k}$ is symmetric, it is sufficient to show that $a(n-1,k,m-1) - a(n-1,k-1,m-1)=(k+1)a(n-1,k,m)+(n-k)a(n-1,k-1,m)$, which can be done with a straightforward induction argument. 
\begin{theorem}
Given a partition $\lambda$ of $n$ with contents $c_1,\ldots,c_n$ and $0\leq k \leq n$,
$$\textnormal{QYT}_{=k+1}(\lambda) = \frac{\sum_{m=0}^n a(n,k,m) e_m(c_1,\ldots,c_n)}{\prod_{u\in\lambda}h(u)}.$$
\end{theorem}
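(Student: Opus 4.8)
The plan is to reduce the statement to the polynomial identity
$$P_{n,k}(x_1,\ldots,x_n)=\sum_{m=0}^{n}a(n,k,m)\,e_m(x_1,\ldots,x_n),$$
since substituting $x_i=c_i$ and dividing by $\prod_{u\in\lambda}h(u)$ then yields the theorem by Theorem~\ref{qytLattice}. As $a(n,k,m)$ is by definition the coefficient of $e_m$ in $P_{n,k}$, everything comes down to showing that $P_{n,k}$ lies in the $\mathbb{Z}$-span of $e_0,\ldots,e_n$; equivalently, that $P_{n,k}$ is multilinear (of degree at most one in each variable) and symmetric, since a multilinear symmetric polynomial in $n$ variables assigns a common coefficient to all squarefree monomials of each degree $m$ and $e_m$ is precisely the sum of those monomials. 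Multilinearity is immediate from the weighted lattice path description: the weight of a single path factors as $\prod_{i=1}^{n}(\varepsilon_i x_i+\mathrm{const}_i)$ with $\varepsilon_i\in\{\pm 1\}$, because the $i$th factor, $x_i+E_i+1$ or $N_i-x_i$, involves only $x_i$; a sum of such products stays multilinear.

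For symmetry I would argue directly, showing invariance under each adjacent transposition $s_i$ that swaps $x_i$ and $x_{i+1}$. Partition the paths from $(0,0)$ to $(k,n-k)$ according to the two steps occupying positions $i$ and $i+1$: a path whose $i$th and $(i+1)$st steps agree (both east or both north) forms a singleton class, while every other path is matched with the unique path obtained by swapping an $\mathrm{EN}$ into an $\mathrm{NE}$ in those positions, all other steps unchanged. Writing $a$ and $b$ for the numbers of east and north steps strictly before position $i$, the positions $i,i+1$ contribute to a singleton weight the factor $(b-x_i)(b-x_{i+1})$ or $(x_i+a+1)(x_{i+1}+a+1)$, which is already symmetric in $x_i$ and $x_{i+1}$; and for a matched pair the sum of the two contributions is
$$(b-x_i)(x_{i+1}+a+2)+(x_i+a+1)(b+1-x_{i+1}),$$
which a direct expansion shows equals $-2x_ix_{i+1}+(b-a-1)(x_i+x_{i+1})+(2ab+a+3b+1)$ and is thus again symmetric in $x_i,x_{i+1}$. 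Every remaining step-weight in a class involves only variables $x_j$ with $j\neq i,i+1$, and the relevant $E_j,N_j$ are unaffected by the local swap, so each class contributes an $s_i$-invariant amount; hence $P_{n,k}$ is symmetric, the integers $a(n,k,m)$ are well defined, and the theorem follows.

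I expect the matched-pair identity to be the one genuine computation, short but essential --- the naked $\mathrm{EN}\leftrightarrow\mathrm{NE}$ involution is not weight-preserving by itself, and it is the grouping with the singletons that rescues it. As an alternative one can follow the induction indicated just before the theorem: granting $P_{i,k}=\sum_m a(i,k,m)e_m$ for $i<n$, the recursion $P_{n,k}=(x_n+k+1)P_{n-1,k}+(n-k-x_n)P_{n-1,k-1}$ of the preceding Proposition produces two readings of each $a(n,k,m)$ --- from the degree-$m$ monomials containing $x_n$ and from those not containing $x_n$ --- whose equality is the identity $a(n-1,k,m-1)-a(n-1,k-1,m-1)=(k+1)a(n-1,k,m)+(n-k)a(n-1,k-1,m)$, and verifying that this identity persists as $n$ grows (with boundary values $a(n,k,m)=0$ for $k\notin\{0,\ldots,n\}$ or $m\notin\{0,\ldots,n\}$) is the remaining bookkeeping. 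Either way, as a byproduct the hit-number identity used in the first proof of Theorem~\ref{qytSum} gives the explicit evaluation $a(n,k,m)=\sum_{j=0}^{k}(-1)^{j}\binom{n+1}{j}(k-j+1)^{n-m}$, which provides an independent handle on these coefficients.
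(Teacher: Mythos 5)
Your proof is correct, and it takes a genuinely different route from the paper's for the one nontrivial step, the symmetry of $P_{n,k}$. The paper argues by induction through the recursion $P_{n,k}=(x_n+k+1)P_{n-1,k}+(n-k-x_n)P_{n-1,k-1}$: assuming $P_{n-1,\cdot}$ symmetric, it reads off the coefficient of a degree-$m$ squarefree monomial in two ways according to whether $x_n$ occurs, and reduces symmetry to the identity $a(n-1,k,m-1)-a(n-1,k-1,m-1)=(k+1)a(n-1,k,m)+(n-k)a(n-1,k-1,m)$, which it leaves to "a straightforward induction." You instead prove $s_i$-invariance directly on the lattice-path model: multilinearity is immediate since each path weight factors as a product of terms each linear in a single $x_i$, and your grouping into singletons (steps $i,i+1$ of the same type, locally contributing $(b-x_i)(b-x_{i+1})$ or $(x_i+a+1)(x_{i+1}+a+1)$) and $\mathrm{EN}\leftrightarrow\mathrm{NE}$ pairs (whose combined local contribution $-2x_ix_{i+1}+(b-a-1)(x_i+x_{i+1})+(2ab+a+3b+1)$ I have checked) is exactly right, as is the observation that the cumulative counts $E_j,N_j$ for $j>i+1$ are unaffected by the local swap. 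Your argument is more self-contained --- it proves the coefficient identity rather than assuming it --- and is bijective in flavor, at the cost of one explicit two-term expansion; the paper's route is shorter on the page and yields the generating recursion $a(n,k,m)=a(n-1,k,m-1)-a(n-1,k-1,m-1)$ as its main output. Two small remarks: your closed form $a(n,k,m)=\sum_{j=0}^{k}(-1)^{j}\binom{n+1}{j}(k-j+1)^{n-m}$ checks out against the recursion and the displayed $P_{3,k}$ (though to get it honestly as a polynomial identity one should note that Haglund's identities hold for all Ferrers boards, or just verify it by induction on the recursion); and it correctly gives $a(1,1,1)=-1$, i.e.\ $P_{1,1}(x_1)=-x_1$, which is what the path weighting and the recursion produce --- the paper's table entry $P_{1,1}=e_1$ appears to carry a sign typo.
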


By the recursion and initial conditions, we have that $a(n,k,0)$ is the Eulerian number $A(n,k)$ and that for $1<m\leq n$, it is easy to generate these coefficients recursively using the relation $a(n,k,m) = a(n-1,k,m-1) - a(n-1,k-1,m-1)$. We also note that for a fixed value of $n-m$ with varying $n$ and $k$, this gives something close to a Pascal's triangle for the coefficients. Each term contributes its positive absolute value and its negative absolute value to the next line of the triangle, so summing over a line gives $0$ except when $m=0$. Therefore, summing over $P_{n,k}$ for all $0 \leq k \leq n$ leaves only the constant terms, and the hook-length formula is easily recovered.

\begin{figure}[ht]
    \centering
    \begin{tabular}{>{$n=}l<{$\hspace{12pt}}*{13}{c}}
3 &&&&&1&&4&&1&&&&\\
4 &&&&1&&3&&-3&&-1&&&\\
5 &&&1&&2&&-6&&2&&1&&\\
6 &&1&&1&&-8&&8&&-1&&-1&\\
\end{tabular}
    \caption{$a(n,k,m)$ for fixed $n-m=3$ and $0\leq k \leq n-1$ increasing along rows.}
    \label{figPascal}
\end{figure}

\section{Generating functions}
The Schur basis generating function for quasi-Yamanouchi tableaux is
\begin{equation}\label{qytGenFn}
\sum_{|\lambda| = n} \sum_{k=1}^n \textnormal{QYT}_{=k}(\lambda)t^{k-1}s_\lambda,
\end{equation}
which has a natural $q$-analogue
\begin{equation}\label{qytQGenFn}
\sum_{|\lambda| = n} \sum_{T\in \textnormal{QYT}(\lambda)} q^{\textnormal{maj}(T)} t^{\textnormal{des}(T)} s_\lambda.
\end{equation}
In this section, we present the fundamental quasisymmetric and monomial expansions of this $q$-analogue of the generating function. We note that the fundamental quasisymmetric expansion is an extension of Theorem 3.8 in \cite{FPSAC}.

\begin{theorem}
For $n \in \mathbb{N}$,
$$\sum_{\pi\in S_n} q^{\textnormal{maj}(\pi)}t^{\textnormal{des}(\pi)} F_{\textnormal{Des}(\pi^{-1})}(x) = \sum_{|\lambda|=n} \sum_{T\in \textnormal{QYT}(\lambda)} q^{\textnormal{maj}(T)}t^{\textnormal{des}(T)}s_\lambda.$$

\end{theorem}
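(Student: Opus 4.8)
\emph{Proof proposal.}
The plan is to reduce the identity to three ingredients that are all already available: Gessel's fundamental quasisymmetric expansion of Schur functions, the descent-preserving properties of the Robinson--Schensted correspondence, and the destandardization bijection between $\textnormal{SYT}(\lambda)$ and $\textnormal{QYT}(\lambda)$ recorded in the preliminaries. No new combinatorics should be needed; the work is entirely in matching up statistics.

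First I would recall Gessel's expansion $s_\lambda = \sum_{T \in \textnormal{SYT}(\lambda)} F_{\textnormal{Des}(T)}(x)$ from \cite{Ge84}, the result that motivates the whole paper. Next I would invoke the Robinson--Schensted map $\pi \mapsto (P(\pi),Q(\pi))$, a bijection from $S_n$ onto pairs of standard Young tableaux of a common shape $\lambda$ with $|\lambda|=n$, together with the classical facts that $\textnormal{Des}(Q(\pi)) = \textnormal{Des}(\pi)$ and $\textnormal{Des}(P(\pi)) = \textnormal{Des}(\pi^{-1})$ (the latter via the symmetry $(P(\pi^{-1}),Q(\pi^{-1})) = (Q(\pi),P(\pi))$). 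In particular $\textnormal{maj}(\pi)=\textnormal{maj}(Q(\pi))$ and $\textnormal{des}(\pi)=\textnormal{des}(Q(\pi))$, so, reindexing the sum on the left by $(P,Q)$ and noting that the summand factors as a function of $Q$ alone times a function of $P$ alone,
$$\sum_{\pi\in S_n} q^{\textnormal{maj}(\pi)}t^{\textnormal{des}(\pi)} F_{\textnormal{Des}(\pi^{-1})}(x) = \sum_{|\lambda|=n}\Bigl(\sum_{Q\in\textnormal{SYT}(\lambda)} q^{\textnormal{maj}(Q)}t^{\textnormal{des}(Q)}\Bigr)\Bigl(\sum_{P\in\textnormal{SYT}(\lambda)} F_{\textnormal{Des}(P)}(x)\Bigr) = \sum_{|\lambda|=n}\Bigl(\sum_{Q\in\textnormal{SYT}(\lambda)} q^{\textnormal{maj}(Q)}t^{\textnormal{des}(Q)}\Bigr)s_\lambda,$$
where the last step applies Gessel's expansion to the inner $P$-sum.

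To finish, I would use the destandardization map of \cite{AsSe17}, which is a bijection $\textnormal{SYT}(\lambda)\to\textnormal{QYT}(\lambda)$; since the descent set, major index, and descent statistic of a quasi-Yamanouchi tableau are \emph{defined} to be those of its standardization, the substitution $\sum_{Q\in\textnormal{SYT}(\lambda)} q^{\textnormal{maj}(Q)}t^{\textnormal{des}(Q)} = \sum_{T\in\textnormal{QYT}(\lambda)} q^{\textnormal{maj}(T)}t^{\textnormal{des}(T)}$ is term-by-term, and the right-hand side of the theorem results. Specializing $q=t=1$ (or $t=1$) recovers the identity $\sum_{|\lambda|=n}\textnormal{SYT}(\lambda)\,s_\lambda$-style statements and shows this extends Theorem~3.8 of \cite{FPSAC}.

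The bookkeeping is routine; the one point I would flag as the crux is the second descent identity $\textnormal{Des}(P(\pi)) = \textnormal{Des}(\pi^{-1})$, since this is exactly what reconciles the asymmetric appearance of $\pi^{-1}$ on the left with the coordinate-free sum over shapes on the right, and it must be stated with the row-orientation convention used here (rows counted bottom to top, with $i\in\textnormal{Des}(T)$ when $i+1$ lies above $i$) so that it is compatible with the form of Gessel's expansion being cited. An alternative route that avoids RSK would expand both sides in the fundamental basis and compare coefficients of each $F_\sigma(x)$, but that reintroduces the same descent-counting facts, so the Robinson--Schensted argument seems cleanest.
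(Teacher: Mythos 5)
Your proposal is correct and follows essentially the same route as the paper: both arguments run the left-hand sum through RSK, use that $\textnormal{Des}(Q(\pi))=\textnormal{Des}(\pi)$ and $\textnormal{Des}(P(\pi))=\textnormal{Des}(\pi^{-1})$, apply Gessel's expansion to the insertion-tableau sum, and finish with the descent-preserving SYT--QYT correspondence. The only cosmetic difference is that the paper organizes the permutations into dual Knuth equivalence classes (the fibers of $Q$) before summing, whereas you reindex directly over pairs $(P,Q)$ and factor the sum; the underlying facts invoked are identical.
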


\begin{proof}
Connect all $\pi \in S_n$ by colored edges corresponding to dual Knuth relations to get a graph $G$ and
 identify each permutation $\pi$ with its image $(P(\pi),Q(\pi))$ through RSK.
Dual Knuth relations do not change the descent set of a permutation, and the descent set of a permutation corresponds to the descent set of its recording tableau $Q(\pi)$. Therefore, all permutations in a connected component of $G$ have the same descent and major index statistics and map to the same recording tableau. 

On the other hand, RSK respects dual Knuth relations between permutations and their insertion tableaux, so the equivalence classes formed by dual Knuth relations guarantee that the insertion tableaux on a connected component range over exactly all $T\in \textnormal{SYT}(\lambda)$ for some $\lambda$. 
The descent set of an insertion tableau $P(\pi)$ is the same as the descent set of $\pi^{-1}$.
Then give each vertex of a connected component the weight $q^{\textnormal{maj}(\pi)}t^{\textnormal{des}(\pi)} F_{\textnormal{Des}(\pi^{-1})}(x)$ and apply Gessel's fundamental quasisymmetric expansion to show that each connected component has summed weight $q^{\textnormal{maj}(Q)}t^{\textnormal{des}(Q)} s_{sh(Q)}$, where $Q$ is the recording tableau shared by the connected component. RSK forms a bijection between $\pi \in S_n$ and pairs of SYT $(P,Q)$ of the same shape, so summing over all connected components of $G$, applying a counting argument, and using the correspondence between SYT and QYT completes the proof.
\end{proof}

For the monomial symmetric function expansion, we use multiset permutations. We can define descents and major index for multiset permutations in the same way as for permutations in $S_n$, and we write $S_{\lambda}$ for the set of multiset permutations of $\{1^{\lambda_1},2^{\lambda_2},\ldots\}$.

\begin{lemma} \label{genFnMono}
Given a partition $\lambda$ of $n$,
$$\sum_{\pi \in S_\lambda} q^{\textnormal{maj}(\pi)} t^{\textnormal{des}(\pi)} = \sum_{\nu \geq \lambda} K_{\nu\lambda} \sum_{T\in \textnormal{QYT}(\nu)} q^{\textnormal{maj}(T)} t^{\textnormal{des}(T)}.$$
\end{lemma}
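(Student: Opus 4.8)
The plan is to prove the lemma by applying the RSK correspondence to words (multiset permutations), using the classical fact that the standard recording tableau of a word records that word's descent set.

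First I would recall that Schensted row-insertion is a bijection from $S_\lambda$, the set of words of content $\lambda$, onto the set of pairs $(P,Q)$ in which $P$ is a semistandard Young tableau of content $\lambda$ and $Q$ is a standard Young tableau of the same shape; for a fixed shape $\nu \vdash n$ there are exactly $K_{\nu\lambda}$ admissible insertion tableaux $P$, and $K_{\nu\lambda} \neq 0$ precisely when $\nu \geq \lambda$ in dominance order, which matches the index set on the right-hand side of the lemma.

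Second, and this is the crux, I would invoke the identity $\textnormal{Des}(w) = \textnormal{Des}(Q(w))$, where $i \in \textnormal{Des}(w)$ means $w_i > w_{i+1}$ and $\textnormal{Des}(Q(w))$ is the tableau descent set in the sense of this paper (so $i$ is a descent when $i+1$ lies in a higher row than $i$). This is a consequence of the two-step bumping lemma for row insertion: when $w_{i+1}$ is inserted into the tableau built from $w_1,\dots,w_i$, the box it creates lies strictly above the box created at step $i$ exactly when $w_i > w_{i+1}$, and weakly below and to the right of it otherwise. Hence also $\textnormal{maj}(w) = \textnormal{maj}(Q(w))$ and $\textnormal{des}(w) = \textnormal{des}(Q(w))$.

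Third, since $q^{\textnormal{maj}(w)} t^{\textnormal{des}(w)}$ depends only on $Q(w)$, summing over $w \in S_\lambda$ and grouping by the common shape $\nu$ of $P(w)$ and $Q(w)$ gives
\[ \sum_{w\in S_\lambda} q^{\textnormal{maj}(w)} t^{\textnormal{des}(w)} = \sum_{\nu \geq \lambda} K_{\nu\lambda} \sum_{Q\in\textnormal{SYT}(\nu)} q^{\textnormal{maj}(Q)} t^{\textnormal{des}(Q)}. \]
Finally, the standardization bijection between $\textnormal{QYT}(\nu)$ and $\textnormal{SYT}(\nu)$ — under which, by definition, a quasi-Yamanouchi tableau and its standardization share descent set and major index — rewrites each inner sum as $\sum_{T\in\textnormal{QYT}(\nu)} q^{\textnormal{maj}(T)} t^{\textnormal{des}(T)}$, yielding the asserted identity.

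I expect the main obstacle to be pinning down the descent identity of the second step with the correct bottom-to-top row convention: the statement itself is classical, but its proof is a short induction off the bumping lemma, and one must check that the convention in force here really produces "$i+1$ above $i$" rather than the reverse. Everything after that is bookkeeping. As an alternative route that avoids word-RSK entirely, I would note that the lemma also falls out of the preceding theorem by extracting the coefficient of the monomial $x_1^{\lambda_1} x_2^{\lambda_2} \cdots$ from both sides: on the right one uses that this coefficient in $s_\nu$ is $K_{\nu\lambda}$ (since $\lambda$ is a partition), and on the left one uses the standardization map $S_\lambda \xrightarrow{\sim} \{\pi \in S_n : \textnormal{Des}(\pi^{-1}) \subseteq \{\lambda_1, \lambda_1+\lambda_2, \dots\}\}$, which preserves $\textnormal{maj}$ and $\textnormal{des}$, to identify the extracted left-hand coefficient with $\sum_{\pi\in S_\lambda} q^{\textnormal{maj}(\pi)} t^{\textnormal{des}(\pi)}$.
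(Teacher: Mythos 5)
Your proposal is correct and is essentially the paper's own argument: RSK applied to words of content $\lambda$, the fact that the standard tableau in the resulting pair carries the descent set (hence maj and des) of the word while the other tableau of weight $\lambda$ contributes the multiplicity $K_{\nu\lambda}$, and finally the standardization bijection between $\textnormal{SYT}(\nu)$ and $\textnormal{QYT}(\nu)$. The only cosmetic difference is that the paper labels the standard descent-recording tableau as $P$ and the weight-$\lambda$ tableau as $Q$, the reverse of your (more conventional) assignment; the substance is identical.
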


\begin{proof}
RSK gives a bijection between multiset permutations $\pi \in S_\lambda$ and pairs of tableaux $(P,Q)$ of the same shape $\nu \geq \lambda$. In particular, $P$ is an SYT with descents in the same positions as $\pi$ and $Q$ has weight $\lambda$. Then since the descent set and major index are preserved, using the correspondence between SYT and QYT proves the claim. 
\end{proof}

\begin{theorem}
For $n \in \mathbb{N}$,
$$\sum_{|\lambda| = n} \sum_{\pi \in S_\lambda} q^{\textnormal{maj}(\pi)} t^{\textnormal{des}(\pi)} m_\lambda = \sum_{|\nu|=n}\sum_{T\in \textnormal{QYT}(\nu)} q^{\textnormal{maj}(T)}t^{\textnormal{des}(T)}s_\lambda.$$
\end{theorem}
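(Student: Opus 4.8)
The plan is to deduce this Schur-vs-monomial identity from Lemma~\ref{genFnMono} by summing over all partitions $\lambda$ of $n$ and bookkeeping the change of basis carefully. First I would note the (presumably intended) correction that the right-hand side should read $\sum_{|\nu|=n}\sum_{T\in\textnormal{QYT}(\nu)} q^{\textnormal{maj}(T)}t^{\textnormal{des}(T)} s_\nu$, so that the statement is genuinely an equality of symmetric functions in $x$; the left-hand side is a symmetric function because $m_\lambda$ ranges over the monomial basis and the coefficients $\sum_{\pi\in S_\lambda} q^{\textnormal{maj}(\pi)} t^{\textnormal{des}(\pi)}$ depend only on $\lambda$.

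The core step is to multiply both sides of Lemma~\ref{genFnMono} by $m_\lambda$ and sum over all $\lambda\vdash n$. On the left this gives exactly $\sum_{|\lambda|=n}\sum_{\pi\in S_\lambda} q^{\textnormal{maj}(\pi)}t^{\textnormal{des}(\pi)} m_\lambda$. On the right I get $\sum_{|\lambda|=n}\sum_{\nu\geq\lambda} K_{\nu\lambda}\Big(\sum_{T\in\textnormal{QYT}(\nu)} q^{\textnormal{maj}(T)}t^{\textnormal{des}(T)}\Big) m_\lambda$. Interchanging the order of summation, this becomes $\sum_{|\nu|=n}\Big(\sum_{T\in\textnormal{QYT}(\nu)} q^{\textnormal{maj}(T)}t^{\textnormal{des}(T)}\Big)\sum_{\lambda\leq\nu} K_{\nu\lambda} m_\lambda$. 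The inner sum over $\lambda$ is, by the classical expansion of Schur functions in the monomial basis, precisely $s_\nu = \sum_{\lambda} K_{\nu\lambda} m_\lambda$ (the Kostka number $K_{\nu\lambda}$ vanishes unless $\lambda\leq\nu$ in dominance order, so extending the sum to all $\lambda\vdash n$ is harmless). Substituting yields $\sum_{|\nu|=n}\sum_{T\in\textnormal{QYT}(\nu)} q^{\textnormal{maj}(T)}t^{\textnormal{des}(T)} s_\nu$, which is the desired right-hand side.

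There is essentially no serious obstacle here: the whole argument is a change-of-basis computation once Lemma~\ref{genFnMono} is in hand, and the only thing to be careful about is the direction of the dominance condition in $K_{\nu\lambda}$ and the justification that the interchange of the double sum is a finite rearrangement (it is, since everything is indexed by partitions of the fixed integer $n$). If one prefers a proof independent of Lemma~\ref{genFnMono}, one could instead run the RSK argument directly: for each $\nu\vdash n$ and each $Q\in\textnormal{SSYT}(\nu)$ with $\textnormal{wt}(Q)=\mu$, RSK pairs it with SYT $P$ of shape $\nu$, preserving $\textnormal{Des}$ and $\textnormal{maj}$ of the multiset permutation, and summing the monomials $x^{\textnormal{wt}(Q)}$ over all recording tableaux $Q$ of shape $\nu$ reconstitutes $s_\nu$; grouping by the common insertion tableau $P$ (equivalently, by its destandardization in $\textnormal{QYT}(\nu)$) and tracking the weight $q^{\textnormal{maj}}t^{\textnormal{des}}$ gives the identity. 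I would present the first, shorter route and mention the second only if space allows.
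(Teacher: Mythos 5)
Your proof is correct, and it takes a genuinely more direct route than the paper. The paper proves this theorem by downward induction on the dominance poset of partitions of $n$: it writes the left-hand side in the Schur basis with unknown coefficients $C_\lambda$, compares coefficients of $m_\lambda$ on both sides using the triangularity $s_\nu=\sum_{\lambda\leq\nu}K_{\nu\lambda}m_\lambda$ with $K_{\lambda\lambda}=1$, and then invokes Lemma~\ref{genFnMono} to solve for $C_\lambda$ given the inductive knowledge of $C_\nu$ for $\nu>\lambda$. You instead sum Lemma~\ref{genFnMono} against $m_\lambda$ over all $\lambda\vdash n$, interchange the (finite) double sum, and reassemble $s_\nu=\sum_\lambda K_{\nu\lambda}m_\lambda$ directly; this eliminates the induction entirely and makes the argument a one-line change of basis. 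Both proofs rest on exactly the same two ingredients --- Lemma~\ref{genFnMono} and the Kostka expansion of Schur functions --- so the content is the same, but your packaging is cleaner and arguably the way the paper should have written it. You are also right that the $s_\lambda$ on the right-hand side of the statement is a typo for $s_\nu$; the paper's own proof implicitly treats it as such. Your alternative RSK argument sketched at the end is likewise sound and is essentially the proof of Lemma~\ref{genFnMono} inlined into the theorem, so it adds nothing beyond the first route.
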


\begin{proof}
We proceed by induction on the poset of partitions of $n$ under dominance order. The inductive claim is that the coefficient of $s_\lambda$ on the right hand side is the desired coefficient, and the inductive assumption is that the claim is true for all $\nu > \lambda$. As a base case, this clearly holds for $\lambda= (n)$ by computation. By the triangularity of the expansion of Schur functions into monomials, the coefficients of $m_\lambda$ on each side forces
$$\sum_{\pi \in S_\lambda} q^{\textnormal{maj}(\pi)} t^{\textnormal{des}(\pi)} = C_\lambda + \sum_{\nu > \lambda} K_{\nu\lambda} \sum_{T\in \textnormal{QYT}(\nu)} q^{\textnormal{maj}(T)} t^{\textnormal{des}(T)},$$
where $C_\lambda$ is the coefficient of $s_\lambda$ on the right hand side, and the second term comes from the expansion of each $s_\nu$, $\nu > \lambda$. Applying Lemma \ref{genFnMono} immediately shows that $C_\lambda = \sum_{T\in\textnormal{QYT}(\lambda)} q^{\textnormal{maj}(T)} t^{\textnormal{des}(T)}$. Continuing this induction downwards on the poset eventually proves the claim for all partitions of $n$. 
\end{proof}

\section{Applications}

We have already mentioned two instances of quasi-Yamanouchi tableaux proving to be a useful concept. The first was in Gessel's fundamental quasisymmetric expansion of Schur polynomials, due to Assaf and Searles \cite{AsSe17}. To make the statements in the introduction more precise, when the number of variables $x_1,x_2,\ldots$ is $k$, the standard Young tableaux that index the nonzero terms of the expansion are exactly those corresponding to quasi-Yamanouchi tableaux with maximum value at most $k$.
\begin{theorem}[Theorem 2.7, \cite{AsSe17}]
The Schur polynomial $s_\lambda(x_1,\ldots, x_k)$ is given by 
$$s_\lambda(x_1,\ldots,x_k) = \sum_{T\in \mathrm{QYT}_{\leq k}(\lambda)} F_{\textnormal{wt}(T)}(x_1,\ldots,x_k),$$
where all terms on the right hand side are nonzero.
\end{theorem}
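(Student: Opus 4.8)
The plan is to prove this by starting from the combinatorial definition of the Schur polynomial as a sum over semistandard Young tableaux, $s_\lambda(x_1,\ldots,x_k) = \sum_{S \in \mathrm{SSYT}_k(\lambda)} x^{\mathrm{wt}(S)}$, and grouping the monomials according to a ``standardization of the content vector.'' First I would recall the destandardization/standardization bijection between $\mathrm{SYT}(\lambda)$ and $\mathrm{QYT}(\lambda)$ described in the Preliminaries, and the key fact that destandardization sends an SYT with $k$ runs to a QYT with maximum value $k-1$. The central observation is that for each $S \in \mathrm{SSYT}_k(\lambda)$, the positions of its entries determine an underlying standard Young tableau (read entries $1,2,\ldots,k$ in order, breaking ties within a repeated value by reading along rows left to right, which is forced by semistandardness), and that the set of all $S$ sharing a given underlying SYT $T$ contributes exactly one fundamental quasisymmetric summand.

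The key steps, in order, would be: (1) Fix $T \in \mathrm{SYT}(\lambda)$ and describe precisely which semistandard fillings $S$ with entries in $[k]$ standardize to $T$ — these are obtained by choosing a weakly increasing sequence of values $1 \le i_1 \le i_2 \le \cdots \le i_n \le k$ assigned to the cells in the order dictated by $T$, subject to the constraint that $i_j < i_{j+1}$ whenever $j \in \mathrm{Des}(T)$ (a descent forces a strict increase to maintain column-strictness, while a non-descent allows equality), and conversely every such choice yields a valid SSYT. (2) Conclude that $\sum_{S \mapsto T} x^{\mathrm{wt}(S)} = F_{\mathrm{Des}(T)}(x_1,\ldots,x_k)$ directly from the definition of $F_\sigma$ given in the Preliminaries. (3) Observe that $\mathrm{Des}(T)$ is precisely the data recording where runs of $T$ begin, so that under destandardization $T \mapsto T'$, the weight vector $\mathrm{wt}(T')$ is the composition recording run lengths, and $F_{\mathrm{Des}(T)} = F_{\mathrm{wt}(T')}$. (4) Summing over all $T \in \mathrm{SYT}(\lambda)$, equivalently over all $T' \in \mathrm{QYT}(\lambda)$, gives $s_\lambda(x_1,\ldots,x_k) = \sum_{T'} F_{\mathrm{wt}(T')}(x_1,\ldots,x_k)$. (5) For the nonvanishing claim: $F_{\mathrm{wt}(T')}(x_1,\ldots,x_k)$ is a nonempty sum of monomials precisely when the number of parts of the composition $\mathrm{wt}(T')$ is at most $k$, i.e. when $T'$ has maximum entry at most $k$; by the run-count correspondence this holds for exactly the $T' \in \mathrm{QYT}_{\le k}(\lambda)$, so restricting the sum to that set loses no terms and introduces no zero terms.

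The main obstacle I anticipate is step (1): making airtight the claim that the standardization of a content vector interacts correctly with the row/column conditions — specifically, verifying that ``weakly increasing along the reading order of $T$ with forced strict increases exactly at $\mathrm{Des}(T)$'' is equivalent to ``the resulting filling is a genuine SSYT of shape $\lambda$.'' One direction (SSYT $\Rightarrow$ the inequality pattern) requires checking that a descent of $T$, meaning $j{+}1$ sits strictly above $j$, indeed forces $i_j < i_{j+1}$ by column-strictness, while a non-descent places $j{+}1$ weakly right of $j$ in a row context compatible with equality; the other direction requires showing no additional column or row violations can arise. This is essentially the standard ``standardization'' argument underlying Gessel's theorem, so I would either cite it or dispatch it with a short lemma, but it is the one place where care is genuinely needed rather than routine bookkeeping. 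Everything downstream — identifying $\mathrm{Des}(T)$ with run-starts and hence with the support size of $\mathrm{wt}(T')$ — is immediate from the definitions already set up in the Preliminaries.
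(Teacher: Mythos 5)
This statement is quoted in \textsection 7 as Theorem 2.7 of \cite{AsSe17}; the paper gives no proof of it, so there is nothing internal to compare against. Judged on its own, your argument is correct and is essentially the standard derivation: grouping $\mathrm{SSYT}_k(\lambda)$ by standardization yields Gessel's expansion $s_\lambda(x_1,\ldots,x_k)=\sum_{T\in\mathrm{SYT}(\lambda)}F_{\mathrm{Des}(T)}(x_1,\ldots,x_k)$, and the terms that survive in $k$ variables are exactly those with $|\mathrm{Des}(T)|\le k-1$, i.e.\ those whose destandardization lies in $\mathrm{QYT}_{\le k}(\lambda)$. Your step (1) is handled correctly: a descent $j\in\mathrm{Des}(T)$ forces $i_j<i_{j+1}$ because equal values in an SSYT form a horizontal strip ordered left-to-right under standardization, while column-strictness in the converse direction follows since any two cells in the same column of $T$ must have a descent between their entries. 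The only point worth making explicit in a final write-up is the identification implicit in the statement between the composition $\mathrm{wt}(T')$ and the subset $\mathrm{Des}(T)\subseteq[n-1]$ of its partial sums, since the paper's \textsection 2 defines $F_\sigma$ only for subsets $\sigma$; you do address this in step (3), and the fact that $\mathrm{wt}(T')$ has exactly $m$ nonzero parts when $T'$ has largest entry $m$ uses the quasi-Yamanouchi condition (the entries appearing form an initial segment), which is what makes step (5) go through.
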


The second instance was in the coefficients of Jack polynomials under a certain biniomial coefficient basis \cite{FPSAC}. More specifically, if we let $J_{\mu}^{(\alpha)}(x)$ denote the integral form, type A Jack polynomials and write $\Tilde{J}_{\mu}^{(\alpha)}(x) = \alpha^n J_{\mu}^{(1/\alpha)}(x)$, then we have the following theorem.

\begin{theorem}[Theorem 3.4, \cite{FPSAC}]
Let $\lambda$ be a partition of $n$ and $\lambda'$ be its conjugate. Then for the coefficient of $s_\lambda$ in $\tilde{J}_{(n)}^{(\alpha)}(X)$
$$\langle \tilde{J}_{(n)}^{(\alpha)}(x), s_\lambda\rangle = \sum_{k=0}^{n-1} a_k((n),\lambda) {\alpha + k \choose n},$$
we have $a_k((n),\lambda) =n!  \textnormal{QYT}_{=k+1}(\lambda')$.
\end{theorem}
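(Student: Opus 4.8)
The plan is to connect the two independent descriptions of the coefficients $a_k((n),\lambda)$ of $\tilde J_{(n)}^{(\alpha)}$ in the binomial basis $\binom{\alpha+k}{n}$. By Theorem 3.4 of \cite{FPSAC}, the expansion of $\tilde J_{(n)}^{(\alpha)}$ in the Schur basis is known in terms of a hit-number formula for Ferrers boards; more precisely, the coefficient of $s_\lambda$ is, up to normalization by $\prod_{u\in\lambda}h(u)$, a sum $\sum_k h_k(B_{\lambda'})\binom{\alpha+k}{n}$ (this is the hit-number interpretation referenced in the text immediately preceding Theorem \ref{qytHit}). On the other hand, the text records that there is also a description of the same coefficient in terms of quasi-Yamanouchi tableaux. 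So the strategy is: state both expansions of $\langle \tilde J_{(n)}^{(\alpha)}, s_\lambda\rangle$ as polynomials in $\alpha$ expressed in the binomial basis $\{\binom{\alpha+k}{n}\}_{k=0}^{n-1}$, and equate coefficients.

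First I would invoke Theorem \ref{qytHit}, which already gives $\textnormal{QYT}_{=k+1}(\lambda)=h_k(B_{\lambda'})/\prod_{u\in\lambda}h(u)$, i.e. $h_k(B_{\lambda'}) = \left(\prod_{u\in\lambda}h(u)\right)\textnormal{QYT}_{=k+1}(\lambda)$. By the hook-length formula (Frame--Robinson--Thrall), $\prod_{u\in\lambda}h(u) = n!/\textnormal{SYT}(\lambda)$, but more to the point we also have $\prod_{u\in\lambda'}h(u) = \prod_{u\in\lambda}h(u)$ since conjugation permutes hook lengths. Next I would substitute the hit-number formula into the right-hand side of the $\tilde J$-expansion coming from \cite{FPSAC}, getting $\langle \tilde J_{(n)}^{(\alpha)}, s_\lambda\rangle = \sum_{k=0}^{n-1}\left(\prod_{u\in\lambda'}h(u)\right)\textnormal{QYT}_{=k+1}(\lambda')\binom{\alpha+k}{n}$, where the shape is $\lambda'$ because the hit board attached to shape $\lambda$ in the Jack expansion is $B_{\lambda'}$. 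Now observe that $\prod_{u\in\lambda'}h(u) = n!/\textnormal{SYT}(\lambda') = n!/\textnormal{SYT}(\lambda)$, but in fact the cleaner route is simply: the coefficient of $\binom{\alpha+k}{n}$ in $\langle \tilde J_{(n)}^{(\alpha)}, s_\lambda\rangle$ is, by definition, $a_k((n),\lambda)$, and by the hit-number computation it equals $\left(\prod_{u\in\lambda}h(u)\right)\textnormal{QYT}_{=k+1}(\lambda')$; since $\prod_{u\in\lambda}h(u)$ does not obviously equal $n!$, one must check that the normalization in the \cite{FPSAC} hit-number formula carries the factor $n!/\prod_{u\in\lambda}h(u)\cdot\prod = n!$ — that is, track exactly where the $\prod h(u)$ cancels.

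Concretely, the bookkeeping step is this. Comparing with Theorem \ref{qytHit}: $h_k(B_{\lambda'}) = \left(\prod_{u\in\lambda}h(u)\right)\textnormal{QYT}_{=k+1}(\lambda)$. Replacing $\lambda$ by $\lambda'$ (legitimate since $\lambda''=\lambda$ and the statement is symmetric in this sense) gives $h_k(B_{\lambda}) = \left(\prod_{u\in\lambda'}h(u)\right)\textnormal{QYT}_{=k+1}(\lambda') = \left(\prod_{u\in\lambda}h(u)\right)\textnormal{QYT}_{=k+1}(\lambda')$. If the \cite{FPSAC} formula reads $\langle \tilde J_{(n)}^{(\alpha)}, s_\lambda\rangle = \frac{n!}{\prod_{u\in\lambda}h(u)}\sum_k h_k(B_{\lambda'})\binom{\alpha+k}{n}$, then substituting the expression for $h_k(B_{\lambda'})$ yields $\langle \tilde J_{(n)}^{(\alpha)}, s_\lambda\rangle = n!\sum_k \textnormal{QYT}_{=k+1}(\lambda')\binom{\alpha+k}{n}$, and matching with $\sum_k a_k((n),\lambda)\binom{\alpha+k}{n}$ — using linear independence of $\{\binom{\alpha+k}{n}\}_{k=0}^{n-1}$ as polynomials in $\alpha$ — gives $a_k((n),\lambda) = n!\,\textnormal{QYT}_{=k+1}(\lambda')$, as desired.

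The main obstacle is getting the normalization and the conjugation bookkeeping exactly right: one has to be careful which board ($B_\lambda$ versus $B_{\lambda'}$, possibly up to rotation via Proposition \ref{Blambda}) appears in the \cite{FPSAC} hit-number interpretation of the coefficient of $s_\lambda$, and which partition ($\lambda$ versus $\lambda'$) indexes the relevant product of hook lengths, so that the two factors of $\prod_{u}h(u)$ cancel to leave exactly $n!$. Everything else — linear independence of the binomial basis, $\lambda''=\lambda$, invariance of the hook-length product under conjugation — is routine. So the proof is essentially: quote the \cite{FPSAC} hit-number formula for $\langle \tilde J_{(n)}^{(\alpha)}, s_\lambda\rangle$, substitute Theorem \ref{qytHit} (with Proposition \ref{Blambda} if needed to reconcile boards), cancel the hook-length products, and compare coefficients in the binomial basis.
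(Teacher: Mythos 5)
This statement is not proven in the paper at all: it is Theorem 3.4 of \cite{FPSAC}, quoted in the applications section as an external input. Your proposal therefore has nothing in the paper to match against, but it also has a more serious problem: it is circular with respect to the paper's own logic. Theorem \ref{qytHit} --- the identity $\textnormal{QYT}_{=k+1}(\lambda)=h_k(B_{\lambda'})/\prod_{u\in\lambda}h(u)$ that you invoke as your main tool --- is obtained in Section 3 precisely by comparing the two interpretations of $\langle\tilde J_{(n)}^{(\alpha)},s_\lambda\rangle$ given in \cite{FPSAC}, one of which is exactly the statement you are trying to prove. Assuming the hit-number interpretation together with Theorem \ref{qytHit} and then ``deriving'' the QYT interpretation just retraces that comparison backwards; nothing new is established. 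The circularity could in principle be broken by noting that Theorem \ref{qytHit} also follows independently from Theorem \ref{qytMaj} at $q=1$ (via $(P,\omega)$-partitions rather than via Jack polynomials), but you do not say this, and even then your argument rests entirely on a hit-number formula for the Jack coefficients that is never stated or proved anywhere in this paper --- you yourself hedge with ``if the \cite{FPSAC} formula reads\ldots''. A genuine proof would have to establish that formula, or connect $\tilde J_{(n)}^{(\alpha)}$ to descents of standard Young tableaux by some direct route (for instance through the Polya/Foulkes character decomposition sketched in Section 7).

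There is also a concrete bookkeeping error. Theorem \ref{qytHit} gives $h_k(B_{\lambda'})=\bigl(\prod_{u\in\lambda}h(u)\bigr)\,\textnormal{QYT}_{=k+1}(\lambda)$, with the \emph{unconjugated} shape on the right. So if, as you first assert, the coefficient of $s_\lambda$ involves $h_k(B_{\lambda'})$, your substitution produces $n!\,\textnormal{QYT}_{=k+1}(\lambda)$, not $n!\,\textnormal{QYT}_{=k+1}(\lambda')$. What you actually compute is the identity for $h_k(B_{\lambda})$ (which does yield $\textnormal{QYT}_{=k+1}(\lambda')$), and you then substitute it into an expression containing $h_k(B_{\lambda'})$. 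For the theorem to come out as stated, the board in the \cite{FPSAC} hit-number formula must be $B_\lambda$ rather than $B_{\lambda'}$; since you never pin down which board appears, the conjugation in your conclusion is unjustified as written.
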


On the representation theoretic side, QYT make another appearance with \emph{Foulkes characters}. First, associate a skew partition with a permutation as follows. For $\pi \in S_n$, let the signature $\sigma(\pi)$ of $\pi$ be a sequence of length $n-1$ of $+$s and $-$s so that $\sigma(\pi)_i = +$ if $i \not\in \textnormal{Des}(\pi)$ and $\sigma(\pi)_i = -$ if $i \in \textnormal{Des}(\pi)$. We can extend this definition to standard Young tableaux as well, using their definition of descent set.
To produce a ribbon shape given a signature $\sigma$, begin with a cell on a square grid, then at the $i$th step for $1 \leq i \leq n-1$, if $\sigma_i = -$, take a south step, and if $\sigma_i = +$, then take a west step, adding the cell at each step to the diagram. This gives a ribbon of length $n$, which we denote $R(\sigma)$. 
\begin{figure}[ht]
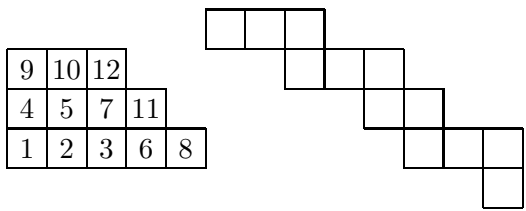

    \centering
$$\tableau{\\
9 & 10 & 12\\
4 & 5 & 7 & 11 \\
1 & 2 & 3 &6 & 8\\
}
\tableau{\ & \ & \ \\
&&\ & \ & \ \\
&&&& \ & \ \\
&&&&& \ & \ & \ \\
&&&&&&& \ \\}
$$
    \caption{The tableau on the left has signature $
\sigma = ++-++-+-++-$, which gives the ribbon $R(\sigma)$ on the right.}
    \label{figRuns}
\end{figure}

Kerber and Th\"urling \cite{KT84} obtain the decomposition of the skew representation $[R(\sigma)]$ using a ``cascade of diagrams" (see their paper for the full algorithm). 
They define the Foulkes character $\chi^{n,k}$ in to be
$$\chi^{n,k} = \sum_{\sigma} \chi^{R(\sigma)},$$
where the sum is over signatures $\sigma$ with exactly $k$ $+$s. 
If the order of nodes added in the cascade is recorded by placing an $i$ at the position of the $i$th node, their rules for producing the cascade from a particular signature create exactly all standard Young tableaux with the corresponding signature. Since the signature is essentially just the descent set, the correspondence between QYT and SYT means that the Foulkes character can be expressed as
$$\chi^{n,k} = \sum_{|\lambda|=n}\textnormal{QYT}_{=n-k}(\lambda) \chi^{\lambda}.$$
Define the \emph{Polya character} $\chi_n$

by
$$\chi_n(\pi) = m^{\#\textnormal{ of cycles of }\pi},$$
where $\pi \in S_n$. Kerber and Th\"urling showed that this has the decomposition
$$\chi_n = \sum_k {m+k\choose n} \chi^{n,k},$$
which from the QYT perspective gives the following result. 
\begin{proposition} The Polya character has the decomposition
$$\chi_n = \sum_k {m+k\choose n} \sum_{|\lambda|=n} \textnormal{QYT}_{=n-k}(\lambda) \chi^\lambda.$$
\end{proposition}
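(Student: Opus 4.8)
The plan is to combine the two descriptions of Foulkes characters that appear immediately above the statement. Kerber and Th\"urling's identity $\chi_n = \sum_k {m+k \choose n}\chi^{n,k}$ writes the Polya character as a ${m+k\choose n}$-weighted sum of Foulkes characters, and the discussion preceding the proposition already establishes, via the cascade-of-diagrams correspondence together with the bijection between SYT and QYT, that $\chi^{n,k} = \sum_{|\lambda|=n}\textnormal{QYT}_{=n-k}(\lambda)\,\chi^{\lambda}$. So the entire content of the proposition is to substitute the second expression into the first.

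First I would quote the Kerber--Th\"urling decomposition $\chi_n = \sum_k {m+k\choose n}\chi^{n,k}$. Next I would replace each $\chi^{n,k}$ by the QYT-indexed sum of irreducible characters just recalled, obtaining the double sum $\sum_k {m+k\choose n}\sum_{|\lambda|=n}\textnormal{QYT}_{=n-k}(\lambda)\,\chi^{\lambda}$, which is exactly the claimed formula. No reindexing or interchange of summation is strictly required, though one may equivalently group the terms as $\sum_{|\lambda|=n}\bigl(\sum_k {m+k\choose n}\textnormal{QYT}_{=n-k}(\lambda)\bigr)\chi^{\lambda}$ to read off the multiplicity of each irreducible $\chi^\lambda$ in $\chi_n$.

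The one point deserving a word of care is the range of the index $k$: in $\chi^{n,k}$ the integer $k$ counts the $+$ signs in a signature of length $n-1$, so $0 \le k \le n-1$, and accordingly $\textnormal{QYT}_{=n-k}(\lambda)$ runs over QYT of $\lambda$ with maximum value between $1$ and $n$, consistent with the conventions used throughout the paper (and with the convention that an empty QYT-set contributes $0$). Since every step is a direct substitution of an already-proven identity, there is no genuine obstacle: the real work was done earlier in deriving the QYT form of the Foulkes character, and what remains is simply to carry the substitution through.
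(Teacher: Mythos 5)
Your proposal is correct and matches the paper's own (implicit) argument exactly: the proposition is obtained by substituting the QYT expression for the Foulkes character $\chi^{n,k}$ into Kerber and Th\"urling's decomposition $\chi_n = \sum_k {m+k\choose n}\chi^{n,k}$. Your remark on the range of $k$ is a reasonable extra check but does not change the argument.
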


Now we get a nice surprise: replacing $\chi^\lambda$ by $s_\lambda$, letting $m=\alpha$, and multiplying by $n!$ gives precisely $\Tilde{J}_{(n)}^{(\alpha)}(x)$.

Our final example concerns the \emph{coinvariant algebra} $R_n$, which has been closely studied in both algebraic and geometric combinatorics. Recently for example, the Delta Conjecture of Haglund, Remmel, and Wilson \cite{HRW18} has received a great deal of attention. Haglund, Rhoades, and Shimozono \cite{HRS18} showed that a specialization of the combinatorial side of the Delta conjecture is the graded Frobenius image of a generalization of the coinvariant algebra. 

The coinvariant algebra is defined as
$$R_n = \frac{\mathbb{Q}[x_1,\ldots,x_n]}{\langle e_1(x_1,\ldots,x_n), \ldots, e_n(x_1,\ldots,x_n)\rangle}.$$
A common basis of $R_n$ is the \emph{Garsia-Stanton basis}, composed of monomials $gs_\pi (x_1,\ldots,x_n)$, $\pi \in S_n$, where
$$gs_\pi(x_1,\ldots,x_n) = \prod_{d \in \textnormal{Des}(\pi)} x_{\pi_1}\cdots x_{\pi_d}.$$
Lusztig and Stanley \cite{Sta79} gave the graded Frobenius image of $R_n$. 
$$\textnormal{grFrob}(R_n;q) = \sum_{|\lambda| = n} \sum_{T\in \textnormal{SYT}(\lambda)} q^{\textnormal{maj}(T)}s_{\lambda}(x).$$
Comparing with equation (\ref{qytQGenFn}), we can see that grFrob$(R_n;q)$ is equation (\ref{qytQGenFn}) at $t=1$. In this case, the degree of the monomial $gs_\pi$ captures the major index statistic. We can view the coinvariant algebra in another way that allows us to capture the descent statistic as well. Let $\mathcal{Y} = \{y_S \ |\  S\subseteq \{1,\ldots,n\}\}$ and 
$$\theta_i = \sum_{\substack{ S\subseteq\{1,\ldots,n\} \\ |S| = i}} y_S.$$ 
Garsia and Stanton \cite{GS84} proved that $R_n$ is isomorphic to
$$\mathcal{R}_n = \frac{\mathbb{Q}[\mathcal{Y}]}{\langle y_S\cdot y_T, \theta_1,\ldots,\theta_n \rangle},$$
where $y_S\cdot y_T$ is the product over $S$ and $T$ that are incomparable under inclusion ordering. They showed that we get a basis $\{y_\pi \ | \ \pi \in S_n\}$ defined by 
$$y_\pi = \prod_{d\in \textnormal{Des}(\pi)} y_{\{\pi_1,\ldots,\pi_d\}}.$$
Note that we can go between $y_\pi$ and $gs_\pi$ via the map that sends $y_{\{i_1,\ldots,i_k\}}$ to $x_{i_1}\cdots x_{i_k}$. If we assign to $y_S$ a $q$-degree of $|S|$ and a $t$-degree of $1$, then $y_\pi$ has a $q$-degree of maj$(\pi)$ and $t$-degree of des$(\pi)$. This produces a bigraded Frobenius image which is precisely equation (\ref{qytQGenFn}).

\begin{proposition} The bigraded Frobenius image of $R_n$ is
$$\textnormal{grFrob}(\mathcal{R}_n;q,t) = \sum_{|\lambda|=n} \sum_{T\in \textnormal{SYT}(\lambda)} q^{\textnormal{maj}(T)}t^{\textnormal{des}(T)}s_\lambda.$$
\end{proposition}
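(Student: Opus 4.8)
The plan is to read off the bigraded $\mathbb{Q}[S_n]$-module structure of $\mathcal{R}_n$ from the Garsia--Stanton basis $\{y_\pi : \pi \in S_n\}$ and then match its bigraded Frobenius image against equation~(\ref{qytQGenFn}) coefficient by coefficient. First I would verify that the $\mathbb{Z}^2$-grading on $\mathbb{Q}[\mathcal{Y}]$ assigning $y_S$ the bidegree $(|S|,1)$ descends to $\mathcal{R}_n$ and is $S_n$-equivariant: a product $y_S\cdot y_T$ is bihomogeneous of bidegree $(|S|+|T|,2)$ and each $\theta_i$ is bihomogeneous of bidegree $(i,1)$, so the defining ideal is $\mathbb{Z}^2$-homogeneous, while the action $\sigma\colon y_S\mapsto y_{\sigma(S)}$ preserves bidegrees and permutes the ideal generators among themselves. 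Hence $\mathcal{R}_n=\bigoplus_{a,b}\mathcal{R}_n^{(a,b)}$ as a bigraded $\mathbb{Q}[S_n]$-module, and since each $y_\pi$ is bihomogeneous of bidegree $(\mathrm{maj}(\pi),\mathrm{des}(\pi))$ and the $y_\pi$ form a basis of $\mathcal{R}_n$ \cite{GS84}, the set $\{y_\pi : \mathrm{maj}(\pi)=a,\ \mathrm{des}(\pi)=b\}$ is a basis of $\mathcal{R}_n^{(a,b)}$.

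Writing $\mathrm{Frob}(M)$ for the Frobenius characteristic of an $S_n$-module $M$, we then have $\mathrm{grFrob}(\mathcal{R}_n;q,t)=\sum_{a,b}q^a t^b\,\mathrm{Frob}(\mathcal{R}_n^{(a,b)})$, so extracting the coefficient of $s_\lambda$ reduces the proposition to the multiplicity statement that the multiplicity of the Specht module $S^\lambda$ in $\mathcal{R}_n^{(a,b)}$ equals $\#\{T\in\mathrm{SYT}(\lambda) : \mathrm{maj}(T)=a,\ \mathrm{des}(T)=b\}$. Granting this, the coefficient of $s_\lambda$ in $\mathrm{grFrob}(\mathcal{R}_n;q,t)$ becomes $\sum_{T\in\mathrm{SYT}(\lambda)}q^{\mathrm{maj}(T)}t^{\mathrm{des}(T)}$, which by the destandardization bijection between $\mathrm{SYT}(\lambda)$ and $\mathrm{QYT}(\lambda)$---under which, by definition, $\mathrm{maj}$ and $\mathrm{des}$ are preserved---equals $\sum_{T\in\mathrm{QYT}(\lambda)}q^{\mathrm{maj}(T)}t^{\mathrm{des}(T)}$, i.e.\ the coefficient of $s_\lambda$ in~(\ref{qytQGenFn}). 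This is the desired equality.

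The multiplicity statement is a refinement, by descent sets of standard tableaux, of the Lusztig--Stanley formula $\mathrm{grFrob}(R_n;q)=\sum_{\lambda}\sum_{T\in\mathrm{SYT}(\lambda)}q^{\mathrm{maj}(T)}s_\lambda$ \cite{Sta79}, and it is the real content of the argument. I would establish it in one of two ways. The quick route is to invoke the descent representations of Adin, Brenti, and Roichman, which decompose each homogeneous component of $R_n\cong\mathcal{R}_n$ into submodules indexed by the descent sets that occur among standard tableaux; since both $\mathrm{maj}(T)$ and $\mathrm{des}(T)$ are determined by $\mathrm{Des}(T)$, regrouping these submodules by the pair $(\mathrm{maj},\mathrm{des})$ gives the statement directly. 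The self-contained route is to run the dual Knuth / RSK argument of \textsection 6 one $t$-degree at a time: on the submodule $\mathcal{R}_n^{(t=b)}=\bigoplus_a\mathcal{R}_n^{(a,b)}$, the recording tableau $Q(\pi)$ sorts the basis $\{y_\pi : \mathrm{des}(\pi)=b\}$ into the standard tableaux with exactly $b$ descents while preserving $\mathrm{maj}$, and the insertion tableau $P(\pi)$ supplies the free coordinate witnessing the multiplicity. The main obstacle, in either case, is precisely this passage from combinatorics to representation theory: the basis $\{y_\pi\}$ is not $S_n$-stable, so on its own it determines only the bigraded Hilbert series of $\mathcal{R}_n$, and recovering the $S_n$-isotypic decomposition of each bidegree-$(a,b)$ piece requires the extra structural input of the descent representation theory (or an equivariant RSK-type argument).
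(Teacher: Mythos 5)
Your proposal follows the same route as the paper: pass to the Garsia--Stanton presentation $\mathcal{R}_n$, assign $y_S$ the bidegree $(|S|,1)$ so that $y_\pi$ is bihomogeneous of bidegree $(\mathrm{maj}(\pi),\mathrm{des}(\pi))$, and identify the resulting bigraded Frobenius image with the $q,t$-generating function (\ref{qytQGenFn}). The one substantive difference is that you are more careful than the paper itself: the paper stops at ``this produces a bigraded Frobenius image which is precisely equation (\ref{qytQGenFn}),'' whereas you correctly observe that the $y_\pi$ are not permuted by $S_n$, so the basis alone only pins down the bigraded Hilbert series, and the isotypic decomposition of each bidegree piece requires the further input that the multiplicity of $S^\lambda$ in the $(a,b)$-component is $\#\{T\in\mathrm{SYT}(\lambda):\mathrm{maj}(T)=a,\ \mathrm{des}(T)=b\}$. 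Your two proposed ways of supplying that input --- the Adin--Brenti--Roichman descent representations (regrouped by the pair $(\mathrm{maj},\mathrm{des})$, both of which are functions of the descent set), or an equivariant RSK argument in the spirit of \textsection 6 --- are both viable, and either one closes a step the paper leaves implicit.
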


\section*{Acknowledgements}

I would like to thank Jim Haglund for the suggestion that sparked this work as well as all of his subsequent advice. I am also grateful to Brendon Rhoades for some helpful discussions, which were small in number but large in value, and to Sami Assaf for her comments and for originally introducing me to quasi-Yamanouchi tableaux.

\printbibliography

\end{document}